\numberwithin{equation}{section}
\theoremstyle{definition}
\newtheorem{Definition}{Definition}[section]
\newtheorem{Remark}[Definition]{Remark}
\newtheorem{Conjecture}[Definition]{Conjecture}
\theoremstyle{plain}
\newtheorem{Theorem}[Definition]{Theorem}
\newtheorem{Proposition}[Definition]{Proposition}
\newtheorem{Lemma}[Definition]{Lemma}
\newcommand{\R}{\mathbb R}
\newcommand{\N}{\mathbb N}
\newcommand{\cR}{\mathcal{R}}
\newcommand{\eps}{\varepsilon}
\newcommand{\comp}{\Subset}
\newcommand{\D}{\mathcal{D}}
\newcommand{\Riem}{\mathrm{Riem}}
\DeclareMathOperator{\CBB}{CBB}
\DeclareMathOperator{\CAT}{CAT}
\DeclareMathOperator{\inj}{inj}
\DeclareMathOperator{\CR}{CR} 
\DeclareMathOperator{\SR}{SR} 
\DeclareMathOperator{\tnorm}{tnorm} 
\DeclareMathOperator{\Img}{Im}
\DeclareMathOperator{\Hess}{Hess}
\newcommand{\loc}{\mathrm{loc}}
\newcommand*\dif{\mathop{}\!\mathrm{d}}
\newcommand*{\cang}[1]{\Tilde\measuredangle^{#1}}
\newcommand{\enumlabelformat}{\roman}
\newcommand{\enumlabelfont}[1]{#1}
\newlength{\thelabelsep}
\setlist{labelsep=\thelabelsep}
\setlist[enumerate,1]{font=\enumlabelfont,label=(\enumlabelformat*),leftmargin=2.5em}
\setlist[itemize]{leftmargin=2.5em,label=$-$}
\newcounter{inlineenum}
\renewcommand{\theinlineenum}{\enumlabelformat{inlineenum}}
\let\epsilon\varepsilon
\let\phi\varphi
\let\save@mathaccent\mathaccent
\newcommand*\if@single[3]{%
  \setbox0\hbox{${\mathaccent"0362{#1}}^H$}%
  \setbox2\hbox{${\mathaccent"0362{\kern0pt#1}}^H$}%
  \ifdim\ht0=\ht2 #3\else #2\fi
  }
\newcommand*\rel@kern[1]{\kern#1\dimexpr\macc@kerna}
\newcommand*\widebar[1]{\@ifnextchar^{{\wide@bar{#1}{0}}}{\wide@bar{#1}{1}}}
\newcommand*\wide@bar[2]{\if@single{#1}{\wide@bar@{#1}{#2}{1}}{\wide@bar@{#1}{#2}{2}}}
\newcommand*\wide@bar@[3]{%
  \begingroup
  \def\mathaccent##1##2{%
    \let\mathaccent\save@mathaccent
    \if#32 \let\macc@nucleus\first@char \fi
    \setbox\z@\hbox{$\macc@style{\macc@nucleus}_{}$}%
    \setbox\tw@\hbox{$\macc@style{\macc@nucleus}{}_{}$}%
    \dimen@\wd\tw@
    \advance\dimen@-\wd\z@
    \divide\dimen@ 3
    \@tempdima\wd\tw@
    \advance\@tempdima-\scriptspace
    \divide\@tempdima 10
    \advance\dimen@-\@tempdima
    \ifdim\dimen@>\z@ \dimen@0pt\fi
    \rel@kern{0.6}\kern-\dimen@
    \if#31
      \overline{\rel@kern{-0.6}\kern\dimen@\macc@nucleus\rel@kern{0.4}\kern\dimen@}%
      \advance\dimen@0.4\dimexpr\macc@kerna
      \let\final@kern#2%
      \ifdim\dimen@<\z@ \let\final@kern1\fi
      \if\final@kern1 \kern-\dimen@\fi
    \else
      \overline{\rel@kern{-0.6}\kern\dimen@#1}%
    \fi
  }%
  \macc@depth\@ne
  \let\math@bgroup\@empty \let\math@egroup\macc@set@skewchar
  \mathsurround\z@ \frozen@everymath{\mathgroup\macc@group\relax}%
  \macc@set@skewchar\relax
  \let\mathaccentV\macc@nested@a
  \if#31
    \macc@nested@a\relax111{#1}%
  \else
    \def\gobble@till@marker##1\endmarker{}%
    \futurelet\first@char\gobble@till@marker#1\endmarker
    \ifcat\noexpand\first@char A\else
      \def\first@char{}%
    \fi
    \macc@nested@a\relax111{\first@char}%
  \fi
  \endgroup
}
\newcommand\restr[2]{{
  \left.\kern-\nulldelimiterspace 
  #1 
  \vphantom{\big|} 
  \right|_{#2} 
  }}
\title{Distributional sectional curvature bounds for Riemannian metrics of low regularity}
\author{Darius Erös\thanks{{\tt darius.eroes@univie.ac.at}, Faculty of Mathematics, University of Vienna, Austria.}\\Michael Kunzinger\thanks{{\tt michael.kunzinger@univie.ac.at}, Faculty of Mathematics, University of Vienna, Austria.}\\Argam Ohanyan\thanks{{\tt argam.ohanyan@utoronto.ca}, Department of Mathematics, University of Toronto, Ontario, Canada.}\\Alessio Vardabasso\thanks{{\tt alessio.vardabasso@univie.ac.at}, Faculty of Mathematics, University of Vienna, Austria.}
}
\begin{document}

\date{\today}


\maketitle

\begin{abstract}
Sectional curvature bounds are of central importance in the study of Riemannian mani\-folds, both in smooth differential geometry and in the generalized synthetic setting of Alexandrov spaces. Riemannian metrics along with metric spaces of bounded sectional curvature enjoy a variety of, oftentimes rigid, geometric properties. The purpose of this article is to introduce and discuss a new notion of sectional curvature bounds for manifolds equipped with continuous Riemannian metrics of Geroch--Traschen regularity, i.e., $H^1_{\mathrm{loc}} \cap C^0$, based on a distributional version of the classical formula. Our main result states that for $g \in C^1$, this new notion recovers the corresponding bound based on triangle comparison in the sense of Alexandrov. A weaker version of this statement is also proven for locally Lipschitz continuous metrics.

\vspace{1em}

\noindent
\emph{Keywords:} Sectional curvature bounds, tensor distributions, Alexandrov spaces, $\CAT$, $\CBB$, Toponogov's theorem
\medskip

\noindent
\emph{MSC2020:} 53B20, 53C21, 46T30, 30L99

\end{abstract}
\tableofcontents

\section{Introduction}\label{section: intro}

Sectional curvature bounds are of central importance in the study of Riemannian manifolds. Spaces with bounds on the sectional curvature either from above or below enjoy a variety of geometric and topological properties which are usually quite rigid, i.e., slight variations in the curvature bounds do not break these properties outside of some special cases.

The central ingredient in the study of sectional curvature bounds is Toponogov's theorem, which establishes an equivalence of bounds on the sectional curvature tensor with comparison inequalities relating certain geometric configurations with the geometry of the constant curvature model spaces. The latter includes comparing distances between points on triangles, angles between geodesics, angle monotonicity, and four-point comparison, to name just a few. As these geometric configurations only involve the metric distance, such a comparison can be viewed as a constraint on general metric (or length) spaces, allowing one to speak of curvature bounds even in this generality.

The study of metric spaces with curvature bounds (also referred to as \textit{Alexandrov spaces}) has led to a rich structure theory and a very useful framework, producing numerous results which are of great importance (and new) even for smooth Riemannian manifolds. A key indicator for the success of this theory is that such notions of sectional curvature bounds for general metric spaces are \textit{stable} under (pointed) Gromov--Hausdorff convergence. Moreover, by Gromov's compactness theorem, the class of Alexandrov spaces with uniform lower curvature bound as well as uniform upper diameter and dimension bounds is compact. We refer to the standard textbooks in the field \cite{burago2001course, BridsonHaefliger, AKPAlexandrov} as well as the references therein for a more detailed discussion on these topics. We will only be interested in continuous Geroch--Traschen metrics, see below.

The objects of study in this article are Riemannian manifolds $(M,g)$ whose metric $g$ is not smooth. In practice, this usually means that the regularity of $g$ is below $C^2$, as for most global comparison-geometric purposes, the metric is not differentiated more than twice. From the point of view of classical analysis, one might be tempted to define curvature tensors for such $g$ by taking distributional derivatives. It turns out that the most ``useful" class of metrics for which this can be done consistently is the \textit{Geroch--Traschen} \cite{GerochTraschen} class (see also \cite{SteinbauerVickersGerochTraschen}), i.e., metrics with regularity $H^1_\mathrm{loc} \cap L^\infty_\mathrm{loc}$ satisfying a certain uniform nondegeneracy condition. Such metrics induce Levi--Civita connections of regularity $L^2_\mathrm{loc}$, from which curvature tensors can be defined consistently in the language of tensor distributions. We refer to the treatments in \cite{LeflochMardare, SteinbauerVickersGerochTraschen, KOSS} for more details.

Since the curvature tensor of a Geroch--Traschen metric is merely a tensor distribution, one cannot use the classical pointwise formula to define sectional curvature bounds analytically. Instead, in Subsection \ref{section:DistributionalBounds}, we introduce such bounds by way of a distributional inequality along smooth vector fields instead of individual tangent vectors. 

Any continuous Geroch--Traschen metric $g$ on a smooth manifold $M$ induces a natural metric length space structure $(M,d_g)$ on $M$, where $d_g$ is obtained, just as for smooth metrics, as the infimum of lengths of curves connecting two points, see \cite{burtscher_length}. Hence, there are two competing notions of sectional curvature bounds available for such metrics, one distributional, introduced analytically in Subsection \ref{section:DistributionalBounds}, and the other synthetic, based on geometric comparison in the sense of classical Alexandrov space theory. It is therefore of considerable interest to investigate the compatibility of these notions.

We aim to initiate the study of such compatibility questions of sectional curvature bounds for low regularity metrics. Our main result and contribution in this direction concerns metrics of regularity $C^1$. For such metrics, we show that distributional sectional curvature bounds from below (resp.\ above) imply the corresponding Alexandrov bounds from below (resp.\ above). Additionally, we present similar but weaker statements for locally Lipschitz continuous metrics. Our primary technique is regularization, which helps us establish approximate (smooth) sectional curvature inequalities for the approximating metrics, for which classical smooth results can be applied.

Moreover, we reconsider the classical examples of P.\ Hartman and A.\ Wintner within our framework to show that, in stark contrast to the case of smooth geometries, distributional sectional curvature bounds for metrics of low regularity may fail to hold in \textit{any} neighborhood of a given point. 
Comparing this phenomenon to the characteristic behavior of geodesics in Alexandrov spaces, these examples will serve as concrete illustrations of the geometric significance of our results.

The article is structured as follows: We begin with a discussion of the basics of low regularity Riemannian metrics (in particular, the continuous Geroch--Traschen class) in Subsection \ref{Subsection: lowregmetric} and give the definition of distributional sectional curvature bounds for such metrics in Subsection \ref{section:DistributionalBounds}. Next, in Subsection \ref{section:Regularization}, we lay out the regularization methods based on manifold-wide convolution which we will use throughout. Furthermore, we establish approximate sectional curvature bounds for the smooth convolved metrics in Proposition \ref{Prop: Localseccurvapprox}. In Subsection \ref{subsection: syntheticalexandrov}, we review the basic notions of synthetic curvature bounds from the theory of Alexandrov spaces, but restrict ourselves only to those formulations of curvature bounds which we intend to use. In the last part of Section \ref{Section: prelim} on preliminary notions and results, i.e., Subsection \ref{subsec: incompleteRiemann}, we generalize some results (including the famous Klingenberg Lemma) to incomplete Riemannian manifolds, which are less often considered in the literature. We will need such generalizations since we often work with local curvature bounds for the smooth metrics on open neighborhoods, which are of course not complete when regarded as Riemannian manifolds. Our main results are presented in Section \ref{Section: distrib implies alexandrov}: In Subsection \ref{subsec: lower} we show that distributional lower sectional curvature bounds by $k \in \R$ for a Riemannian metric $g \in C^1$ imply that $(M,d_g)$ is locally $\CBB(k)$. Similarly, in Subsection \ref{subsec: above}, we establish that a distributional bound from above for such a metric by $k$ implies that $(M,d_g)$ is locally $\CAT(k)$. Moreover, we also prove that metrics of regularity $C^{0,1}_{\loc}$ satisfy variable Alexandrov bounds provided their distributional curvature is bounded. In addition, in Subsection \ref{Subsec: examples}, we discuss the Hartmann--Wintner examples mentioned above within the context of our results. Finally, in Section \ref{section: Conclusion and outlook}, we summarize our results and give an outlook on related open problems and future work.

\section{Preliminary notions and results} \label{Section: prelim}

\subsection{Low regularity Riemannian metrics} \label{Subsection: lowregmetric}

Throughout this paper, we follow the conventions of \cite{petersen} when referring to notions of Riemannian geometry. In particular, the Riemann curvature tensor is defined as 
\[
\Riem(X,Y)Z:=[\nabla_X,\nabla_Y]Z-\nabla_{[X,Y]}Z.
\]
Let us recall some basic properties of Riemannian metrics of low regularity. 
As we intend to define curvature tensors distributionally, we will work within the Geroch--Traschen \cite{GerochTraschen} class, which is the class of metrics of lowest regularity for which this is feasible without losing certain stability properties (see \cite[Sec.\ 4.2]{LeflochMardare}, cf.\ also \cite{SteinbauerVickersGerochTraschen}). In our case, we will only be interested in continuous Geroch--Traschen metrics:

\begin{Definition}[Continuous Geroch--Traschen metrics]
A Riemannian metric $g$ of regularity $H^1_\mathrm{loc}\cap C^0$ on a smooth manifold $M$ is called a \textit{continuous Geroch--Traschen metric}. We will refer to the pair $(M,g)$ as a \textit{(continuous) Geroch--Traschen Riemannian manifold}.
\end{Definition}

A continuous Geroch--Traschen metric gives rise to the Levi--Civita connection $\nabla$ on $TM$ in the usual way (via the Koszul formula), which then itself is of regularity $L^2_\mathrm{loc}$ (i.e.\ all Christoffel symbols $\Gamma^i_{jk}$ are in $L^2_\mathrm{loc}$ for any smooth coordinate chart of $M$). Generally, given any linear connection $\nabla$ on $TM$ of regularity $L^2_\mathrm{loc}$, it is possible to define the corresponding Riemann curvature tensor $\Riem$ by its action on smooth vector fields $X,Y,Z \in C^\infty(TM)$ to produce a distributional vector field $\Riem(X,Y)Z$. In the case where $\nabla$ is the Levi--Civita connection of a Geroch--Traschen metric $g$, we can also give a meaning to the object $g(\Riem(X,Y)Z,W)$ as a distribution on $M$ (\cite[Rem.\ 4.5]{LeflochMardare}, cf.\ also \cite[Cor.\ 5.2]{SteinbauerVickersGerochTraschen}). Moreover, by taking traces with respect to smooth local frames, it is possible to define the corresponding distributional Ricci and scalar curvatures as well. We refer to the discussions in \cite{LeflochMardare} and \cite{KOSS} for more details regarding tensor distributions on manifolds, and for the technical details on how to define curvature tensors starting with $L^2_\mathrm{loc}$-connections.

\begin{Remark}[Higher regularity metrics]
Since our main results in this article concern metrics that are more regular than just Geroch--Traschen, namely $g \in C^1$, let us note that in this case the Levi--Civita connection is $C^0$ and the curvature tensors are tensor distributions of order one (cf.\ the discussion in \cite[Sec.\ 2]{KOSS}). In particular, by inserting $C^1$-vector fields we produce a well-defined scalar distribution (as, locally, multiplying a distribution of order one with a $C^1$-function is well-defined). Moreover, some of our results also concern metrics of local Lipschitz regularity $C^{0,1}_{\mathrm{loc}}$; in this case, the Levi--Civita connection is of class $L^{\infty}_{\mathrm{loc}}$.
\end{Remark}

\begin{Remark}[Low regularity metrics and length spaces]
A continuous Riemannian metric $g$ on a smooth connected manifold $M$ naturally induces the structure of a metric length space $(M,d_g)$. The definition of $d_g$ is the same as in the smooth case via the infimum of lengths of (either piecewise smooth or, equivalently, absolutely continuous) curves connecting two given points. We refer to \cite{burtscher_length}
for proofs.
\end{Remark}

\subsection{Distributional sectional curvature bounds}\label{section:DistributionalBounds}
Having discussed necessary preliminary notions regarding low regularity metrics and connections, we now want to define sectional curvature bounds using a suitable distributional version of the standard formula
\begin{equation*}
    \sec(v,w) = \frac{g(\Riem(v,w)w,v)}{\lvert v \rvert^2 \lvert w \rvert^2 - g(v,w)^2} \qquad \forall \,v,w\in T_p M.
\end{equation*}
Clearly, we cannot just adopt this formula to our setting as the Riemann curvature tensor is no longer defined by its action on single vectors if the regularity of $g$ drops below $C^2$. Instead, we will rely on a definition using smooth vector fields. To conveniently phrase this, given a continuous Geroch--Traschen metric $g$, we rewrite $\Riem$ as a symmetric bilinear form (cf.\ \cite[Sec.\ 3.1.2]{petersen}) by introducing
\begin{align*}
    \cR \colon C^\infty ({\wedge}^2 TM) \times C^\infty ({\wedge}^2 TM) \longrightarrow \D'(M)\\
    \cR(X\wedge Y, V\wedge W):= g (\Riem(X,Y)W,V).
\end{align*}
Here and in what follows, given a function (or distribution) space $\mathcal{F}$ and a vector bundle $E$ over $M$, we denote the space of sections of $E$ of regularity class $\mathcal{F}$ by $\mathcal{F}(E)$. Thus, $C^\infty ({\wedge}^2 TM)$ denotes the space of smooth sections of ${\wedge}^2 TM$.

Let us further define the natural inner product on ${\wedge}^2 TM$ induced by $g$ via
\begin{align*}
    g(X\wedge Y, V\wedge W):= g(X,V)g(Y,W) - g(X,W)g(Y,V).
\end{align*}
With this in place, a sectional curvature bound can conveniently be written as an inequality between the symmetric bilinear forms $\cR$ and the inner product on ${\wedge}^2 TM$:
\begin{Definition}[Distributional sectional curvature bounds]\label{def:distrSec}
Let $(M,g)$ be a continuous Geroch--Traschen Riemannian manifold and let $k \in \R$.  Then $(M,g)$ is said to fulfill a \textit{lower (resp.\ upper) distributional sectional curvature bound} by $k$, symbolically $\sec \geq k$ (resp.\ $\leq$), if
\begin{align}
    \cR \geq k g\quad\text{(resp.}\leq) \quad \text{in the distributional sense}.
\end{align}
Here, ``in the distributional sense'' means that for all $X,Y \in C^\infty(TM)$,
\begin{align}
    \cR(X\wedge Y,X\wedge Y) \geq k g(X\wedge Y, X\wedge Y) \quad \text{(resp.}\leq) \quad\text{in $\D'(M)$}.
\end{align}
More generally, a continuous Geroch--Traschen Riemannian manifold is said to have \emph{distributional sectional curvature locally bounded from below (resp.\ above)} if for all open relatively compact subsets $U\Subset M $ it holds that
\begin{align}
    \cR \geq k_U g\quad\text{(resp.}\leq) \quad \text{in }\D'(U),
\end{align}
with $k_U\in \R$ possibly depending on $U$.
\end{Definition}

Note that if $g \in C^2$, these definitions are equivalent to the usual ones, as is easily seen upon extending given tangent vectors $v,w \in T_pM$ to smooth vector fields $X,Y \in C^\infty(TM)$.

\subsection{Regularization techniques}\label{section:Regularization}

Working with metrics of low regularity, we will repeatedly employ regularizations that result from a general smoothing procedure based on chart-wise convolution with a mollifier (cf.\ \cite[3.2.10]{GKOS01}, \cite[Sec.\ 2]{KSSV}, \cite[Sec.\ 3.3]{graf2020singularity}). To describe it, let $d:=\dim M$ and let
$\rho\in \D(B_1(0))$ (with $B_1(0)$ the unit ball in $\R^d$), $\int \rho = 1$, $\rho\ge 0$ and, for $\eps\in (0,1]$, set $\rho_{\eps}(x):=\eps^{-d}\rho\left (\frac{x}{\eps}\right)$.
Next, fix a countable atlas $\{(U_\alpha, \psi_\alpha) \}_{\alpha\in \N}$ with relatively compact $U_\alpha$, a subordinate smooth partition of unity $\{\xi_\alpha\}_\alpha$ and functions $\chi_\alpha \in C^\infty_c(U_\alpha)$ with $0\leq \chi_\alpha\leq 1$ and $\chi_\alpha \equiv 1$ on a neighborhood of $\mathrm{supp}\, \xi_\alpha $. Then for any tensor distribution $T\in \D'(T^r_s M)$ and $\eps>0$ we define the smooth $(r,s)$-tensor
\begin{equation*}
    T \star_M \rho_\eps := \sum_{\alpha\in\N}\chi_\alpha (\psi_\alpha)^*( ((\psi_\alpha)_*(\xi_\alpha T) )* \rho_\eps).
\end{equation*}
In this expression, the convolution $((\psi_\alpha)_*(\xi_\alpha T) )* \rho_\eps$ is understood component-wise, recalling that each component is in $\D'(\psi_\alpha (U_\alpha))$. 

This ``manifold convolution'' $\star_M$ has smoothing properties that are closely analogous to those of convolution with a mollifier on open subsets of $\R^d$. In particular, for $T\in \D'(T^r_sM)$ we have
$T\star_M \rho_\eps \to T \ \text{ in } \ \D'(T^r_sM)$ as $\eps\to 0$,
and this convergence is even in $C^k_{\mathrm{loc}}$ or $W^{k,p}_{\mathrm{loc}}$ if $T$ is contained in these spaces (see \cite[Prop.\ 3.5]{graf2020singularity}). Moreover, since
$\rho\ge 0$, $\star_M$ preserves non-negativity: $T\in \D'(M), \ T\ge 0  \ \Rightarrow \ T\star_M \rho_\eps \ge 0$ in $C^\infty(M)$. A similar property holds for non-negative tensor distributions of higher valence.

Given a $C^0$-Riemannian metric $g$ on $M$ and fixing $n\in\N_{>0}$,
we now apply the construction detailed in \cite[Rem.\ 2.4]{KOV24} to obtain a smooth map $(\eps,x) \mapsto g^n_\eps(x)$ such that each $g^n_\eps$ is a smooth Riemannian metric on $M$ and such that, for each $K\Subset M$ there exists some $\eps^n_K>0$ with the property that whenever $x\in K$ and $\eps\in (0,\eps^n_K]$ we have $g^n_\eps(x) = g\star_M \rho_\eps(x)$. In addition, we may strengthen (2.4) in \cite[Rem.\ 2.4]{KOV24} to
\begin{equation}\label{eq:distance_sandwich}
\Big(1-\frac{1}{n}\Big) d_g(x,y) \le d_{g^n_\eps}(x,y) \le \Big(1+\frac{1}{n}\Big)d_g(x,y) \qquad \forall x, y \in M \quad \forall \eps\in (0,1]
\end{equation}
for the corresponding Riemannian distances.

Now fix a compact exhaustion $K_j \subset K_{j+1}^\circ$ of $M$ and let $\eps^n_j := \eps^n_{K_j}$ as above. Clearly, we can assume that $\eps^n_j$ is decreasing both in $n$ and in $j$. Finally, let $g_n := g^n_{\eps^n_n}$. 
Then $g_n$ is a smooth Riemannian metric on $M$ and given any $K\Subset M$ there exists some $j\in \N$ such that
$K\subset K_j$. Therefore, $g_n|_K = g\star_M\rho_{\eps^n_n}|_K$ for all $n\ge j$. In particular, $g_n$ has the
same convergence properties as $g_\eps$. Furthermore, due to \eqref{eq:distance_sandwich} it follows that 
\begin{equation}\label{eq:distance_sandwich2}
\Big(1-\frac{1}{n}\Big) d_g(x,y) \le d_{g_n}(x,y) \le \Big(1+\frac{1}{n}\Big)d_g(x,y) \qquad \forall x, y \in M \quad \forall n\in \N.
\end{equation}
In particular, $d_{g_n} \to d_g$
uniformly on compact subsets of $M\times M$ as $n\to \infty$. Henceforth, whenever we talk about approximating metrics $g_n$, we will always mean the ones constructed here. 

The approximating metrics $g_n$ are a very useful tool as they allow us to pass from distributional curvature inequalities for $g$ to approximate smooth inequalities for $g_n$. The main result in this direction we shall use is the following:

\begin{Proposition}[Local sectional curvature bounds for approximating metrics - $C^1$ case] \label{Prop: Localseccurvapprox}
Let $g$ be a $C^1$-Riemannian metric on $M$ with distributional sectional curvature bounded below (resp.\ above) by $k \in \R$. Let $g_n$ be the corresponding approximating metrics and denote by $\cR_n$ the sectional curvature of $g_n$. Then for any compact set $L\Subset M$ and any $\delta > 0$ there exists $n_0 = n_0(\delta)$ such that for all $n \geq n_0$, we have that on $L$ 
\begin{align}
\cR_n &\geq (k-\delta)g_n, \\
\text{resp.\ }\cR_n &\leq (k+\delta) g_n.\phantom{{resp.}}
\end{align}
\end{Proposition}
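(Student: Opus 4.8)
The plan is to work locally in charts and exploit the fact that for $g \in C^1$ the convolved metrics $g_n$ converge to $g$ in $C^1_{\mathrm{loc}}$ while their Christoffel symbols $\Gamma^i_{jk}(g_n)$, being given by first derivatives of $g_n$, converge only in $C^0_{\mathrm{loc}}$ and in particular stay locally uniformly bounded. First I would cover the compact set $L$ by finitely many relatively compact coordinate charts and reduce to a single chart $U$, so that on $U$ we may identify everything with its coordinate expression. The key observation is that, by the definition of the manifold convolution and the construction of $g_n$ recalled before the Proposition, on a slightly larger compact set $L' \Supset L$ we have $g_n = g \star_M \rho_{\eps_n^n}$ for all large $n$, and chart-wise this agrees up to a controlled error with the ordinary mollification $g * \rho_{\eps_n}$ in the coordinate domain.

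The heart of the argument is to express, in coordinates, the quantity $\cR_n(X\wedge Y, X\wedge Y) - k\, g_n(X\wedge Y, X \wedge Y)$ for fixed smooth vector fields $X, Y$ and compare it with the mollification of $\cR(X\wedge Y, X\wedge Y) - k\, g(X\wedge Y, X\wedge Y)$, which is $\geq 0$ (resp.\ $\leq 0$) pointwise because $\star_M$ preserves non-negativity of distributions. The difference between $\big(\cR(X\wedge Y, X\wedge Y) - k g(X\wedge Y,X\wedge Y)\big) \star_M \rho_{\eps_n}$ and $\cR_n(X\wedge Y, X\wedge Y) - k g_n(X \wedge Y, X\wedge Y)$ splits into: (i) a linear term coming from the second derivatives of $g$ — this is exactly matched, since the top-order part of $\Riem$ is linear in $\partial^2 g$ and convolution commutes with differentiation, so mollifying the distribution $\partial^2 g$ and then forming the curvature expression gives the same top-order contribution as forming the curvature of $g_n$; and (ii) the lower-order terms, which are quadratic in $\Gamma = \Gamma(g, \partial g)$ together with the $kg$ term. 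The lower-order terms involve only products of $g_n, g_n^{-1}, \partial g_n$, which converge uniformly on $L'$ to the corresponding (continuous!) expressions for $g$; hence the difference between the $g_n$-version and the mollified $g$-version of these lower-order terms tends to $0$ uniformly on $L$. Combining, we get
\[
\cR_n(X\wedge Y, X\wedge Y) - k\, g_n(X\wedge Y, X\wedge Y) \;\geq\; -\,\eta_n(X,Y) \quad \text{on } L,
\]
with $\eta_n(X,Y) \to 0$ uniformly on $L$ (resp.\ $\leq \eta_n$ in the upper case), the error being $O(\|g_n - g\|_{C^1(L')})$ times a polynomial in $\|g_n\|_{C^1}$, $\|g_n^{-1}\|_{C^0}$ and the $C^1$-norms of $X, Y$ on $L'$.

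To finish, I would upgrade this from ``for each fixed $X, Y$'' to the tensorial statement $\cR_n \geq (k-\delta) g_n$ by a compactness/uniformity argument: the inequality $\cR_n \geq (k-\delta) g_n$ at a point $p$ is equivalent to $\cR_n(\xi,\xi) \geq (k-\delta) g_n(\xi,\xi)$ for all $\xi \in \wedge^2 T_p M$, and by bilinearity and a $g_n$-orthonormal-basis normalization it suffices to test on the (compact) set of $g_n$-unit decomposable bivectors, uniformly over $p \in L$. Since decomposable unit bivectors at points of $L$ can be realized as $X\wedge Y$ for $X, Y$ drawn from a fixed finite family of smooth vector fields (e.g.\ coordinate frames on the finite cover, after rescaling), and since the error estimate above is uniform over such a family, choosing $n_0$ so that $\eta_n < \delta \cdot \inf_{L} \lambda_{\min}$-type bound holds for all $n \geq n_0$ yields the claim. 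The main obstacle is bookkeeping in step (ii): one must check that the nonlinear lower-order terms of the curvature of $g_n$ genuinely differ from the mollified lower-order terms of $g$ only by a $C^0_{\mathrm{loc}}$-small amount — this uses crucially that $g \in C^1$ (so $\Gamma \in C^0$ and mollification converges uniformly), and it is precisely where the argument would break for merely Lipschitz $g$, explaining why only a weaker statement is available in that regularity.
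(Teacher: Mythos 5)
Your overall architecture coincides with the paper's: fix smooth vector fields, compare $\cR_n(X\wedge Y,X\wedge Y)$ with $\cR(X\wedge Y,X\wedge Y)\star_M\rho_{\eps_n^n}$, use that mollification with $\rho\ge 0$ preserves the sign of $\cR(X\wedge Y,X\wedge Y)-kg(X\wedge Y,X\wedge Y)$, and then pass from fixed fields to arbitrary $2$-planes over $L$ by a frame-plus-compactness argument. The one substantive difference is that the paper outsources the central estimate $|\cR_n(X,Y)-\cR(X,Y)\star_M\rho_{\eps_n^n}|\to 0$ uniformly on compacta to a cited Friedrichs-type result, whereas you sketch a direct coordinate proof. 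That is a legitimate alternative, but your treatment of the top-order term is not correct as stated: it is \emph{not} ``exactly matched''. Schematically, the second-order part of $\cR_n(X\wedge Y,X\wedge Y)$ is $f\cdot\bigl((\partial^2 g)\star_M\rho_{\eps}\bigr)$ with $f=X^iY^jX^kY^l$ smooth, while the second-order part of $\cR(X\wedge Y,X\wedge Y)\star_M\rho_\eps$ is $\bigl(f\,\partial^2 g\bigr)\star_M\rho_\eps$; these differ by a commutator of multiplication by $f$ with convolution, applied to $\partial(\partial g)$. Since $\partial g\in C^0$ and $f$ is smooth, this commutator does tend to $0$ in $C^0_{\mathrm{loc}}$ (Friedrichs' lemma), so the gap is fixable, but it is precisely the analytic heart of the proposition and cannot be dismissed as an identity. (This is also where one should use the fully lowered coordinate expression for $R_{ijkl}$, whose second-order part has constant coefficients; the raw formula $\partial\Gamma$ carries a factor $g^{-1}$ in front of $\partial^2 g$, which would require a further commutator.)

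A second, smaller imprecision: a generic $2$-plane at $p\in L$ is spanned by $v=\sum_i\lambda_iX_i|_p$, $w=\sum_j\mu_jX_j|_p$ with point-dependent $(\lambda,\mu)$, so it cannot be realized as $X\wedge Y$ for $X,Y$ from a \emph{fixed finite} family. What saves the argument --- and what the paper does explicitly --- is that the error is controlled by the $C^1$-norms of the test fields and is therefore uniform over the compact family of constant-coefficient combinations $\{(\lambda,\mu):\sum_i\lambda_i^2=\sum_j\mu_j^2=1\}$, which exhausts all nondegenerate $2$-planes over $L$. With these two points repaired, your proof is sound and essentially reproduces the paper's.
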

\begin{proof}
    Similar approximate inequalities are well-known for the Ricci curvature, see for example \cite[Lem.\ 4.1]{graf2020singularity}, \cite[Prop.\ 4.2]{kunzinger2022synthetic}, or in the case of vanishing Riemann curvature in the context of flatness, see \cite[Prop.\ 3.18]{KOV24}, and we shall base our arguments on these sources.
    For definiteness, say that the bound is below by $k$. 
    As a first step, fix $X,Y \in C^\infty(TM)$. Then by \cite[Prop.\ 4.2(ii) and Rem.\ 4.4]{kunzinger2022synthetic},
    given any compact $L\comp M$ and any $\delta>0$ there exists $N\in \N$ such that for $n\ge N$, 
    \begin{equation}\label{eq:sec-XY-est}
       |\cR_n(X\wedge Y,X \wedge Y) - \cR(X\wedge Y,X \wedge Y)\star_M \rho_{\eps_n^n}|<\delta  
    \end{equation}
    on $L$.
    Next, note that without loss of generality, we may assume $L$ to be small enough to admit a local frame $X_1,\dots, X_d$ ($d=\dim M$) of smooth vector fields on a neighborhood  $U$ of $L$ which is orthonormal with respect to some smooth background metric $h$.  By local equivalence of Riemannian metrics, there exist constants $c, C >0$ such that $cg(v,v)\le h(v,v)\le Cg(v,v)$ for each $v\in TM|_L$. Given
    $\delta>0$, due to \eqref{eq:sec-XY-est} there exists some $n_0\in \N$ such that, for $n\ge n_0$, 
    \begin{equation*}
        \left| \sum_{i,j,k,l=1}^d \lambda_i \lambda_j \mu_k \mu_l \big(\cR_n(X_i\wedge X_k,X_j\wedge X_l) - 
        \cR(X_i\wedge X_k,X_j\wedge X_l)\star_M \rho_{\eps_n^n})\big)\right| <\frac{\delta}{3C^2}
    \end{equation*}
    on $L$ for all $(\lambda_1,\dots,\lambda_d)$ and $(\mu_1,\dots,\mu_d)\in \R^d$ with $\sum_i \lambda_i^2 = \sum_i \mu_i^2 =1$. 
    Interchanging the sums with $\star_M$ here (which is permissible since the coefficients are constant) and setting 
    $V:=\sum_i \lambda_i X_i$, $W:= \sum_j \mu_jX_j$, $V$, $W$ are $h$-unit vector fields and 
    \begin{equation}\label{eq:R-estVW}
        \left| \cR_n(V\wedge W, V\wedge W) - \cR(V\wedge W,V\wedge W)\star_M \rho_{\eps_n^n} \right| <\frac{\delta}{3C^2}
    \end{equation}
    on $L$, for any choice of $\lambda_i, \mu_j$ as above. Similarly, using 
    \cite[Prop.\ 4.2(iii)]{kunzinger2022synthetic} it follows that we may additionally suppose $n_0$ to be chosen so large that, for all $n\ge n_0$,
    \begin{equation}\label{eq:g-estVW}
        |k||g(V\wedge W,V\wedge W)\star_M\rho_{\eps_n^n} - g_n(V\wedge W,V\wedge W)| < \frac{\delta}{3C^2}
    \end{equation}
    on $L$. By the assumed distributional sectional curvature bound, $\cR(V\wedge W,V\wedge W)-kg(V\wedge W,V\wedge W)\ge 0$, so non-negativity of $\rho$ gives 
    \[
    \cR(V\wedge W,V\wedge W)\star_M \rho_{\eps_n^n}-kg(V\wedge W,V\wedge W)\star_M \rho_{\eps_n^n}\ge 0.
    \]
    Combining this with \eqref{eq:R-estVW} and \eqref{eq:g-estVW}, we obtain
    \begin{equation*}
     \cR_n(V\wedge W,V\wedge W) - k g_n(V\wedge W,V\wedge W) > - \frac{2\delta}{3C^2}.
    \end{equation*}
    This inequality then also holds for $V, W$ replaced by individual $h$-unit tangent vectors $v, w\in TM|_L$. If, 
    in addition, $v$ and $w$ are $g$-orthogonal, then
    \begin{equation*}
    \begin{split}
     \cR_n(v\wedge w,v\wedge w) - k g_n(v\wedge w,v\wedge w) > - \frac{2\delta}{3C^2}h(v,v)h(w,w) \ge 
     - \frac{2\delta}{3} g(v\wedge w,v\wedge w).
    \end{split}
    \end{equation*}
    Increasing $n_0$ one last time and taking into account the uniform convergence of $g_n$ to $g$, we obtain
    that for each $n\ge n_0$ and all $g$-orthogonal $h$-unit vectors $v, w\in TM|_L$,
    \[
    \cR_n(v\wedge w,v\wedge w)  \ge (k-\delta) g_n(v\wedge w,v\wedge w).
    \]
    Finally, observing that each $g_n$-nondegenerate $2$-plane in any $T_xM$ is spanned by $g$-orthogonal $h$-unit vectors 
    concludes the proof. \qedhere
    
\end{proof}

Note that since one can extend tangent vectors to compactly supported vector fields, the result above provides a varying local sectional curvature bound for the smooth approximating metrics $g_n$, for which classical sectional curvature theory can then be used.

For locally Lipschitz metrics, a distributional bound on the curvature tensor of $g$ still implies local approximate bounds for the metrics $g_n$, but without control on the constants.

\begin{Proposition}[Local sectional curvature bounds for approximating metrics - $C^{0,1}_\loc$ case] \label{Prop: Localseccurvapprox2}
Let $g$ be a $C^{0,1}_\loc$-Riemannian metric on $M$ with distributional sectional curvature bounded below (resp.\ above) by some constant. Let $g_n$ be the corresponding approximating metrics. Then for any compact set $L\Subset M$ there exists $K= K(L)\in  \R$ and $n_0\in \N$ such that for all $n\ge n_0$ we have that on $L$
\begin{align}
     \cR_n &\geq Kg_n, \\
\text{resp.\ } \cR_n &\leq K g_n. \phantom{{resp.}}
\end{align}
\end{Proposition}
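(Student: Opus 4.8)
The plan is to run the argument of Proposition~\ref{Prop: Localseccurvapprox} again, with the $C^1$-estimate \eqref{eq:sec-XY-est} replaced by a merely \emph{uniform}, no longer small, bound. The first and decisive step is to establish the $C^{0,1}_\loc$-analogue of \eqref{eq:sec-XY-est}: for fixed $X,Y\in C^\infty(TM)$ and fixed $L\comp M$ there are a constant $C>0$ — depending on $L$ and on the $C^1$-norms of $X,Y$ on a slightly larger compact, but \emph{not} on $n$ — and an $n_0\in\N$ with
\begin{equation*}
\bigl|\cR_n(X\wedge Y,X\wedge Y)-\cR(X\wedge Y,X\wedge Y)\star_M\rho_{\eps_n^n}\bigr|\le C\qquad\text{on }L\text{, for all }n\ge n_0.
\end{equation*}
I would prove this exactly as the analogous (Lipschitz-case) Ricci estimates, cf.\ \cite[Lem.\ 4.1]{graf2020singularity} and \cite[Prop.\ 4.2]{kunzinger2022synthetic}, now for the sectional-curvature contraction. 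In a chart one has the classical identity expressing $\cR(X\wedge Y,X\wedge Y)=g(\Riem(X,Y)Y,X)=R_{ijkl}X^iY^jY^kX^l$ (appropriately indexed) as the sum of a term linear in $\partial^2 g$ whose coefficients are smooth functions of the (smooth) components of $X,Y$ alone, and a term quadratic in $\partial g$ whose coefficients are smooth functions of $g$, $g^{-1}$ and the components of $X,Y$. Replacing $g$ by $g_n=g\star_M\rho_{\eps_n^n}$ (in a chart, writing $\star_M$ as ordinary mollification $\ast\rho_{\eps_n^n}$ up to the partition-of-unity bookkeeping already handled in \cite{graf2020singularity}) and subtracting $\cR(X\wedge Y,X\wedge Y)\star_M\rho_{\eps_n^n}$, the $\partial^2 g$-part produces commutators between multiplication by a smooth function $a$ (a polynomial in the components of $X,Y$) and the mollification $\ast\rho_\eps$, applied to second derivatives $\partial v$ of components of $g$, with $v$ a component of $\partial g$, hence $v\in L^\infty_\loc$; these are bounded in $L^\infty_\loc$ uniformly in $\eps$, the factor $\eps^{-1}$ from differentiating $\rho_\eps$ being absorbed by the local Lipschitz modulus of $a$, so that $|[\,a,\ast\rho_\eps]\,\partial v|\lesssim \|a\|_{C^1}\|v\|_{L^\infty}$ on $L$. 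The quadratic-in-$\partial g$ part produces differences of expressions each of which is bounded in $L^\infty_\loc$ uniformly in $\eps$, \emph{precisely because $g\in C^{0,1}_\loc$ forces $\partial g\in L^\infty_\loc$} while $g_n^{-1}$ and $g^{-1}$ stay locally uniformly bounded. Unlike in the $C^1$ case, none of these pieces tends to $0$; they are only bounded, which is exactly why the resulting statement carries no control on the constant.

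With this estimate available, the remainder repeats the proof of Proposition~\ref{Prop: Localseccurvapprox} with constants in place of $\delta$. I would shrink $L$ so that a neighbourhood $U\supset L$ admits a smooth $h$-orthonormal frame $X_1,\dots,X_d$; for $(\lambda_i),(\mu_j)\in\R^d$ with $\sum\lambda_i^2=\sum\mu_j^2=1$ put $V=\sum_i\lambda_iX_i$, $W=\sum_j\mu_jX_j$, observe that these range over a bounded subset of $C^1(L)$, and apply the displayed estimate with $(X,Y)=(V,W)$ (interchanging the constant-coefficient sums with $\star_M\rho_{\eps_n^n}$) to obtain a single constant $C_1=C_1(L)$ with $|\cR_n(V\wedge W,V\wedge W)-\cR(V\wedge W,V\wedge W)\star_M\rho_{\eps_n^n}|\le C_1$ on $L$ for all such $V,W$ and all $n\ge n_0$. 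By the hypothesis $\cR\ge kg$, the distribution $\cR(V\wedge W,V\wedge W)-kg(V\wedge W,V\wedge W)$ is non-negative, hence a non-negative Radon measure, and its convolution with $\rho_{\eps_n^n}\ge0$ is a non-negative function; therefore $\cR(V\wedge W,V\wedge W)\star_M\rho_{\eps_n^n}\ge kg(V\wedge W,V\wedge W)\star_M\rho_{\eps_n^n}$ pointwise, and since $g(V\wedge W,V\wedge W)$ is continuous the right-hand side is bounded below on $L$ by a constant $-C_2$ uniformly in $n$. Hence $\cR_n(V\wedge W,V\wedge W)\ge-(C_1+C_2)$ on $L$ for all $n\ge n_0$, and the same holds for individual $h$-unit tangent vectors $v,w$ at points of $L$. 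Using $g_n\to g$ uniformly on $L$, for $n$ large $g_n(v\wedge w,v\wedge w)\ge c_0>0$ for all $g$-orthogonal $h$-unit $v,w$ (as in Proposition~\ref{Prop: Localseccurvapprox}); taking $K:=-(C_1+C_2)/c_0$ then gives $\cR_n(v\wedge w,v\wedge w)\ge Kg_n(v\wedge w,v\wedge w)$ for such $v,w$, and since every $g_n$-nondegenerate $2$-plane in $TM|_L$ is spanned by $g$-orthogonal $h$-unit vectors, $\cR_n\ge Kg_n$ on $L$. The ``resp.''\ case follows by the same argument applied to $-\cR$: one uses that $kg(V\wedge W,V\wedge W)-\cR(V\wedge W,V\wedge W)$ is a non-negative measure, gets $\cR_n(V\wedge W,V\wedge W)\le C_1+C_2$ on $L$, and concludes $\cR_n\le Kg_n$ with $K:=(C_1+C_2)/c_0$.

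The hard part is the uniform bound of the first paragraph — and within it, the fact that the first-order commutator terms $\partial([\,a,\ast\rho_\eps]\,v)$ remain bounded in $L^\infty_\loc$ uniformly in $\eps$. This is the one place where Lipschitz, rather than merely continuous, regularity of $g$ is genuinely needed; it is also the reason one should not hope for any quantitative control of $K$, since the non-negative measure $\cR(V\wedge W,V\wedge W)-kg(V\wedge W,V\wedge W)$ may have a singular part, so that $\cR(V\wedge W,V\wedge W)\star_M\rho_{\eps_n^n}$ need not stay bounded above. Once that estimate is secured — essentially a transcription of the Ricci arguments of \cite{graf2020singularity, kunzinger2022synthetic} to the sectional-curvature contraction — the rest is a cosmetic modification of the $C^1$ proof.
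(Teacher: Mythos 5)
Your proposal is correct and matches the paper's (very brief) proof, which simply reruns the argument of Proposition~\ref{Prop: Localseccurvapprox} with the smallness estimate \eqref{eq:sec-XY-est} replaced by the uniform Friedrichs-type bound for Lipschitz metrics from \cite[Prop.\ 3.1(ii)]{CGHKS2025hawking} — exactly the commutator estimate you sketch in your first paragraph. The remainder of your argument (non-negativity under mollification, uniform lower bound on $g_n(v\wedge w,v\wedge w)$ for $g$-orthogonal $h$-unit vectors, and the resulting constant $K$) is a faithful, correctly executed transcription of the $C^1$ proof with constants in place of $\delta$.
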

\begin{proof}
    This follows by the same arguments as in the proof of Proposition \ref{Prop: Localseccurvapprox}, only using the more general Friedrichs-type result given in \cite[Prop.\ 3.1(ii)]{CGHKS2025hawking}.
\end{proof}

\subsection{Synthetic sectional curvature bounds}\label{subsection: syntheticalexandrov}

The geometric insight underlying the study of sectional curvature conditions on general metric (length) spaces is the Toponogov comparison theorem (sometimes also referred to as the Cartan--Alexandrov--Toponogov comparison theorem), which states that for a (smooth and connected) Riemannian manifold $(M,g)$, a bound on the sectional curvature can equivalently be captured by way of triangle comparison in its naturally induced length space $(M, d_g)$ (cf.\ \cite[Thm. 6.5.6]{burago2001course} or \cite[Thm.\ 1.1]{alexanderbishop2008triangle}). Building on this, one can develop a theory of synthetic curvature bounds for metric spaces. Such spaces are called Alexandrov spaces, and we refer to standard textbooks in the field (e.g.\ \cite{burago2001course, AKPAlexandrov, BridsonHaefliger}) for treatments and references on the subject. In this section, we summarize the basic notions and results that will be needed in the sequel.

As usual, our aim is to compare the geometry of metric spaces with that of the two-dimensional $k$-\emph{model plane} $\mathbb{M}^2(k)$ for varying parameters $k \in \R$. For $k > 0$, this will involve a comparison with the sphere of radius $\sfrac{1}{\sqrt{k}}$, for $k=0$, with the Euclidean plane and for $k < 0$, we compare with the hyperbolic plane of constant curvature $k$. To unify notation, we will use the symbol $\varpi^k$ to denote the diameter of $\mathbb{M}^2(k)$, i.e.\ $\varpi^k=\sfrac{\pi}{\sqrt{k}}$, if $k > 0$ and $\varpi^k= \infty$, otherwise.

More concretely, given three points $x,y,z \in X$, we first define the \emph{model triangle} $\Tilde{\Delta}^{k}(xyz) =: [\Tilde{x}\Tilde{y}\Tilde{z}]$ as any triangle in $\mathbb{M}^2(k)$ with
\begin{equation*}
    \lvert \Tilde{x} - \Tilde{y} \rvert = d(x,y), \quad \lvert \Tilde{y} - \Tilde{z} \rvert = d(y,z), \quad \lvert \Tilde{z} - \Tilde{x} \rvert = d(z,x).
\end{equation*}
If $\mathrm{Per}(x,y,z):=d(x,y)+d(y,z) + d(z,x) < 2 \varpi^k$, such a triangle can be chosen uniquely up to the action of an isometry. We then further introduce the \emph{model angle} $\cang{k}(x;y,z)$ at $x$, using the appropriate law of cosines in $\mathbb{M}^2(k)$, as the angle of the triangle $[\Tilde{x}\Tilde{y}\Tilde{z}]$ at $\Tilde{x}$.

\begin{Remark}[On model angles] \label{RemarkAngleContinuity}
    By the law of cosines for the respective $k$-model space (cf.\ \cite[Sec.\ 1.1]{AKPAlexandrov}), one easily deduces that, for fixed $x,y,z \in X$, the function $\R \ni k \mapsto \cang{k}(x;y,z)$ is continuous and non-decreasing. Moreover, for fixed $x,y,z \in X$ and $k \in \R$, the map $d \mapsto \cang{k}_{d}(x;y,z)$ is continuous with respect to uniform convergence of metrics and jointly continuous in the pair $(k,d)$.
\end{Remark}

\begin{Definition}[Curvature bounded below] (\cite[8.1, 8.2]{AKPAlexandrov})
    Let $(X,d)$ be a metric space. Given some $k \in \R$, we say that a quadruple of points $(p,x_1,x_2,x_3)$ in $X$ satisfies $\CBB(k)$-\emph{comparison} provided
    \begin{equation*}
        \mathrm{Per}(p,x_i,x_j)< 2\varpi^k \quad \text{for all } 1 \leq i,j \leq 3, i \neq j,
    \end{equation*}
    in which case $(p,x_1,x_2,x_3)$ is said to be \emph{admissible}, and if it fulfills the angle comparison
\begin{equation} \label{CBBAngleCondition}
    \cang{k}(p;x_1,x_2) + \cang{k}(p;x_2,x_3) + \cang{k}(p;x_3,x_1) \leq 2 \pi.
\end{equation}
In case every point in $X$ possesses an open neighborhood $U$ in which all admissible quadruples of points satisfy the $\CBB(k)$-comparison condition, we call the space $X$ \emph{locally} $\CBB(k)$ and the set $U$ a $k$-\emph{comparison region}.

More generally, if each point $p \in M$ admits a $k_p$-comparison neighborhood \emph{for some} $k_p \in \R$ (possibly depending on $p$), then we call $X$ \emph{variably} $\CBB$.
\end{Definition}

In our setting, we will mostly be concerned with metric spaces $(X,d)$ whose metric $d$ is \emph{intrinsic}. This means that
    \begin{equation*}
        d(x,y) = \inf_{\gamma} L(\gamma),
    \end{equation*}
where the infimum is taken over all continuous curves $\gamma$ connecting $x$ to 
$y$ and $L(\gamma)$ denotes their variational length. Such spaces are called \emph{length spaces}, where $d$ is referred to as a \emph{length metric}. Equivalently, we can define the \emph{induced intrinsic} metric $\hat{d}(x,y) = \inf_{\gamma} L(\gamma)$ associated with an arbitrary metric $d$ as above and impose the condition $d = \hat{d}$.

\begin{Remark}[Restriction vs.\ induced length metric] \label{RemarkLocalization}
    In the sequel, we will sometimes restrict a given length metric $d$ to an open subset $U \subseteq X$. It is then clear that, in general, this restriction will not coincide with the induced intrinsic metric on $U$. We claim, however, that \emph{locally} they do agree. For this, let $p \in U$ and define $r:=\frac{1}{3}d(p, \partial U) > 0$. Now, let $\gamma$ be a path connecting two arbitrary points $x,y \in B_r(p)$ with length $L(\gamma) < d(x,y) + r$. It follows that $\gamma$ cannot leave $U$, as we would otherwise be able to find some point $z \in X \setminus U$ on its image for which
    \begin{equation*}
        4r \leq d(x,z) + d(z,y) \leq L(\gamma) < d(x,y) + r \leq 3r,
    \end{equation*}
    a contradiction.
\end{Remark}

\begin{Remark}[Locality of $\CBB$] \label{RemarkLocality}
    Suppose any point $p$ in a length space $(X, d)$ possesses a neighborhood $U$ satisfying local $\CBB(k)$-comparison w.r.t.\ the induced intrinsic metric on $U$. Then $X$ itself is locally $\CBB(k)$. Indeed, by Remark \ref{RemarkLocalization}, we only need to guarantee that the size of a $k$-comparison ball $B_r(p)$ around $p$ satisfies $r \leq \frac{1}{3}d(p, \partial U)$.
\end{Remark}

\begin{Definition}[Curvature bounded above]\label{def:CAT}
(\cite[9.1]{AKPAlexandrov})
    Let $(X,d)$ be a metric space. We say that a quadruple $(p_1,p_2,x_1,x_2)$ of points in $X$ satisfies $\CAT(k)$-\emph{comparison} for some $k \in \R$ if one of the following conditions is satisfied:
    \begin{enumerate}
        \item $\hfill \cang{k}(p_1;x_1,x_2)\leq  \cang{k}(p_1;p_2,x_1) + \cang{k}(p_1;p_2,x_2).\hfill$
        \item $\hfill \cang{k}(p_2;x_1,x_2)\leq  \cang{k}(p_2;p_1,x_1) + \cang{k}(p_2;p_1,x_2).\hfill$
        \item One of the six model angles above is not well-defined.
    \end{enumerate}
    In particular we will call \emph{admissible} a quadruple for which all the model angles are well-defined.
    
    We call an open set $U\subseteq X$ a $\CAT(k)$-\emph{comparison region}, for $k\in \R$, if all quadruples of points in $U$ satisfy the $\CAT(k)$-comparison condition. We call the space $(X,d)$ \emph{locally} $\CAT(k)$ if every point in $X$ is contained in a $\CAT(k)$-{comparison region}. If $X$ itself is a $\CAT(k)$-comparison region, we call $(X,d)$ a $\CAT(k)$ space. 
    
    More generally, if every point $p\in X$ possesses an open neighborhood which is a $\CAT(k_p)$-comparison region for some $k_p\in \R$ (possibly depending on $p$), we call $(X,d)$ \emph{variably} $\CAT$.
\end{Definition}

\subsection{Some results on incomplete Riemannian manifolds} \label{subsec: incompleteRiemann}

We now generalize some classical results from (smooth) Riemannian geometry, generally stated for complete Riemannian manifolds, to appropriately small domains in incomplete Riemannian manifolds. This conforms to the intuition that local results in Riemannian geometry hold on incomplete Riemannian manifolds as long as the necessary attention is given to the size of the neighborhoods in question. Our motivation to do this is that we will often be interested in the behavior of our smooth approximating metrics $g_n$ on small domains, which will not be complete when regarded as Riemannian manifolds. Additionally, we will need these generalizations to prove our main theorems for potentially incomplete $(M,g)$.

In this section, $(N,h)$ will always denote an arbitrary (connected) smooth Riemannian manifold.  

We define the \emph{completeness radius} at $p\in N$ as
\begin{equation*}
        \CR_h(p):=\sup\{r\in \R^+ : \overline{B_r(p)}\text{ is compact}\}\in \R^+\cup \{+\infty\}
    \end{equation*}
and the \emph{simple connectedness radius} at $p\in N$ as
    \begin{align*}
        \SR_h(p):=\sup\left\{R\in\R^+: {B_r(p)}\text{ is simply connected }\forall r\in (0,R]\right\}\in \R^+\cup \{+\infty\}.
    \end{align*}
    As a consequence of the fact that the union of an ascending sequence of simply connected open sets is itself simply connected, the supremum in the definition of $\SR_h(p)$ is actually a maximum whenever it is finite.
    
    Finally, we recall the definition of the \emph{injectivity radius} at $p\in N$:
    \begin{align*}
        \inj_h(p):= \sup\{&r\in \R^+: \, \exp_p \colon B_r(0) \to N \text{ is well-defined}\\ 
            &\;\;\text{and it is a diffeomorphism onto its image}\} \in\R^+\cup \{+\infty\}.
    \end{align*}
\begin{Remark}[On the exponential map]\label{rk:expMaps}
    Observe that three possible things can get in the way of $\exp_p \colon B_r(0) \to N$ being a (well-defined) diffeomorphism onto its image:
    \begin{enumerate}
        \item $\exp_p(v)$ is not defined for some $v\in B_r(0)$.
        \item $\exp_p$ admits a critical point $v\in B_r(0)$.
        \item $\exp_p$ is not injective on $B_r(0)$, i.e.\ there exist two distinct geodesics $t\mapsto \exp_p(tv), s\mapsto \exp_p(sw)$ connecting $p$ to $\exp_p(v)=\exp_p(w)$.
    \end{enumerate}
    We prepare the following two remarks on these issues:
    \begin{itemize}
        \item As long as $r \leq \CR_h(p)$, $\exp_p$ is well-defined on $B_r(0)$ due to the extendability of the geodesic ODE solutions on the compact sets $\overline{B_{r-\eps}(p)}$ for every small $\eps >0$. \item  If we further assume that $(N,h)$ has $\sec\leq K$ for some $K\in \R$ then, by Conjugate Point Comparison \cite[Thm.\ 11.12]{LeeRiem}, $\exp_p$ does not admit critical points on $B_{r}(0)$ for $r\leq\varpi^K$.
    \end{itemize}
    
\end{Remark}
We will now adapt a result due to Klingenberg, which, in its original form, states that no homotopy class of paths connecting a point $p$ in a complete Riemannian manifold to some $q \in B_{r}(p)$ while remaining in this ball can contain more than one geodesic provided the sectional curvature is bounded above by $K \in \R$ and $r\leq \varpi^K$.

\begin{Lemma}[Klingenberg's lemma] \label{Lemma: Klingenberg}
    Let $(N,h)$ be a smooth connected Riemannian manifold satisfying $\sec \leq K$ for some $K\in \R$ and fix $p\in N$. Let $r\in \R^+$ with $r\leq {\varpi^K}\wedge \CR_h(p)$ and suppose $\gamma_0, \gamma_1$ are two distinct geodesics joining $p$ to $q\in B_r(p)$. If $\gamma_0$ and $\gamma_1$ are path homotopic in $N$, meaning that there exists a continuous map $G\colon [0,1]\times [0,1]\to N$ such that
    \begin{align*}
        G(\alpha,0)=p,\quad G(\alpha,1)=q, \quad G(0,t)=\gamma_0(t), \quad G(1,t)=\gamma_1(t),
    \end{align*}
    then there must exist $(\alpha_*,t_*)\in [0,1]^2$ such that $d(p,G(\alpha_*,t_*)))\geq r$.
\end{Lemma}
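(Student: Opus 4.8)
The plan is to argue by contradiction and to lift the homotopy $G$, through the exponential map of $h$ at $p$, to the tangent space $T_pN$, where the two geodesics become radial segments and are thus forced to coincide.

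First, I would assume the conclusion fails, so that $d(p,G(\alpha,t))<r$ for all $(\alpha,t)\in[0,1]^2$. By compactness of $[0,1]^2$ one then has $\rho:=\max_{(\alpha,t)}d(p,G(\alpha,t))<r$, so $G$ takes values in $\overline{B_\rho(p)}$, which is compact since $\rho<r\le\CR_h(p)$; moreover $\rho<r\le\varpi^K$. By Remark \ref{rk:expMaps}, $\exp_p$ is defined on $B_r(0)\subseteq T_pN$ (using $r\le\CR_h(p)$), and by Conjugate Point Comparison (cf.\ Remark \ref{rk:expMaps}) it has no critical points there (using $\sec\le K$ and $r\le\varpi^K$). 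Hence $\hat h:=(\exp_p)^\ast h$ is a smooth Riemannian metric on $B_r(0)$ and $\exp_p\colon(B_r(0),\hat h)\to(N,h)$ is a local isometry. Since $\exp_p$ carries each radial segment $t\mapsto tv$ onto the $h$-geodesic $t\mapsto\exp_p(tv)$, and since a curve is a geodesic precisely when its image under a local isometry is, the radial segments are exactly the $\hat h$-geodesics emanating from $0$; by the Gauss Lemma they are minimizing in $(B_r(0),\hat h)$, and $d_{\hat h}(0,v)=|v|$ for all $v\in B_r(0)$.

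The key step is then to lift $G$: one needs a continuous map $\widetilde G\colon[0,1]^2\to B_r(0)$ with $\exp_p\circ\widetilde G=G$ and $\widetilde G(0,0)=0$. Granting this, the argument concludes quickly. As $\exp_p$ is a local diffeomorphism, the sets $(\exp_p)^{-1}(p)\cap B_r(0)$ and $(\exp_p)^{-1}(q)\cap B_r(0)$ are discrete, so the continuous maps $\alpha\mapsto\widetilde G(\alpha,0)$ and $\alpha\mapsto\widetilde G(\alpha,1)$ are constant; thus $\widetilde G(\alpha,0)\equiv 0$ and $\widetilde G(\alpha,1)\equiv\bar v$ for some $\bar v\in B_r(0)$. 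On the other hand, for $i=0,1$ the curve $\widetilde G(i,\cdot)$ starts at $0$ and satisfies $\exp_p\circ\widetilde G(i,\cdot)=\gamma_i$, hence --- $\exp_p$ again being a local isometry --- it is the $\hat h$-geodesic from $0$ with initial velocity $\gamma_i'(0)$, that is, $t\mapsto t\,\gamma_i'(0)$. Evaluating at $t=1$ gives $\gamma_0'(0)=\bar v=\gamma_1'(0)$, so $\gamma_0$ and $\gamma_1$ are geodesics with the same initial point and velocity, whence $\gamma_0=\gamma_1$, contradicting their distinctness.

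The main difficulty, I expect, is in justifying the lifting. One first checks that $\exp_p\colon(B_r(0),\hat h)\to B_r(p)$ is a surjective local diffeomorphism --- surjectivity holding because any $q'\in B_r(p)$ is joined to $p$ by a minimizing geodesic of length $d(p,q')<r$, extracted from a minimizing sequence which eventually lies in the compact set $\overline{B_{\rho'}(p)}$ for any $\rho'$ with $d(p,q')<\rho'<r$. One then lifts each curve $t\mapsto G(\alpha,t)$ starting at $0$, shows that these lifts depend continuously on $\alpha$, and assembles $\widetilde G$. The delicate point --- and the place where the classical complete-manifold argument of Klingenberg must be adapted to the present, possibly incomplete, setting --- is to rule out that a lifted curve escapes $B_r(0)$ through its boundary; here the minimizing property of the radial $\hat h$-geodesics established above (hence the bound $\sec\le K$) is combined with the hypothesis $r\le\CR_h(p)$, which makes $\overline{B_\rho(p)}$ compact and keeps $\exp_p$ a well-behaved local diffeomorphism on a neighborhood of the closed ball $\overline{B_\rho(0)}$.
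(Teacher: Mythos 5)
Your overall strategy --- pull back along $\exp_p$, which is a nonsingular local isometry on $B_r(0)$ by conjugate-point comparison together with $r\le\varpi^K\wedge\CR_h(p)$, lift the homotopy, and derive the contradiction from the endpoints --- is exactly the paper's, and your endgame (discreteness of the fibres of $\exp_p$ over $p$ and $q$ forces $\widetilde G(\alpha,1)$ to be constant, whence $\gamma_0'(0)=\gamma_1'(0)$) is a correct, equivalent variant of the paper's endpoint argument. However, essentially all of the content of the lemma sits in the step you defer, and the mechanism you propose for it does not work as described. You suggest lifting each curve $t\mapsto G(\alpha,t)$ individually and ruling out escape through $\partial B_r(0)$ via the Gauss lemma and compactness of $\overline{B_\rho(p)}$. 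But the identity $d_{\hat h}(0,v)=|v|$ only yields $|v|\ge d_h(p,\exp_p v)$, i.e.\ it bounds the norm of a lifted point from \emph{below} by its distance to $p$ in the base --- the wrong direction for confinement. Indeed, confinement of a single curve in $\overline{B_\rho(p)}$ does not confine its lift: on a flat torus ($K=0$, so $r$ is arbitrary) a loop at $p$ winding $N$ times stays in any small ball around $p$, while its lift starting at $0$ reaches norm $N$, so its maximal lift inside $B_r(0)$ terminates before $t=1$ as soon as $N>r$. Hence ``lift each curve, then check continuity in $\alpha$'' is the wrong order of operations: an individual curve of the homotopy need not admit a lift in $B_r(0)$ at all.

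What makes the lift exist is the homotopy structure itself: one continues the lift in the parameter $\alpha$, starting from the known radial lift $\tilde\gamma_0$. This is what the paper does, via an open--closed (connectedness) argument on the set $I$ of those $\eps$ for which $G|_{[0,\eps]\times[0,1]}$ lifts with the prescribed boundary behaviour; openness uses that $\exp_p$ is a local diffeomorphism along the compact lifted curve $\widetilde G(\eps,\cdot)$, and closedness is precisely where the contradiction hypothesis $G([0,1]^2)\subseteq\overline{B_{r-\delta}(p)}$ enters. If you recast your lifting step in this form, your argument coincides with the paper's; as written, the crucial step is missing and the justification you sketch for it would fail.
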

\begin{proof}
    We consider the lifted paths $\tilde\gamma_0$ and $\tilde\gamma_1$ on $T_pN$ defined by
    \begin{equation*}
        \tilde\gamma_i(t):=t\gamma'_i(0), \quad \text{for } i=0,1.
    \end{equation*}
    By construction, $\exp_p \circ \, \tilde\gamma_i = \gamma_i$ but $\tilde\gamma_0(1)\neq\tilde\gamma_1(1)$. Recalling Remark \ref{rk:expMaps}, we observe that the exponential map admits no critical points in $B_r(0)$ and so is a local diffeomorphism there. In particular, following the geodesic $\gamma_0$, we can find an $\eps\in (0,1]$ such that the family of nearby intermediate curves $\restr{G}{[0,\eps]\times [0,1]}$ can be lifted to a path homotopy $\tilde{G}\colon[0,\eps]\times[0,1]\to T_p N$ with
    \begin{align} \label{eq:KlingenbergPathHomotopy}
        \restr{G}{[0,\eps]\times [0,1]} = \exp_p\circ\, \tilde{G},\quad \tilde{G}(0,t)= \tilde\gamma_0(t),\quad \tilde{G}(\alpha,0)=0,\quad \tilde{G}(\alpha,1)=\tilde\gamma_0(1).
    \end{align}
    As this process can be repeated along the curve $\tilde{G}(\eps, \cdot)$, the set
    \begin{align*}
        I:=\left\{ \eps\geq 0 : \restr{G}{[0,\eps]\times [0,1]} \text{ can be lifted to a path homotopy } \tilde{G}\text{ on }T_pN \text{ satisfying } \eqref{eq:KlingenbergPathHomotopy}\right\}
    \end{align*}
    is open in $[0,1]$. Now assume, by way of contradiction, that $G(\alpha,t)\in B_r(p)$ for all $(\alpha, t)\in [0,1]\times[0,1]$. Then, by compactness and continuity, there must exist some $\delta>0$ with $G([0,1]\times[0,1])\subseteq \overline{B_{r-\delta}(p)}$. Hence, $I$ is also closed as we can extend any $\tilde{G}$ defined on $[0,\eps')\times[0,1]$ to $[0,\eps']\times[0,1]$. By connectedness, we have that $I=[0,1]$, leading to a contradiction as $\tilde G(1,\, \cdot \,)$ would then be a lift of $\gamma_1$ starting in $0$ but ending in $\tilde\gamma_0(1)\neq\tilde\gamma_1(1)$.
\end{proof}

\begin{Proposition}\label{thm:injEstCAT}
    Let $(N,h)$ be a smooth connected Riemannian manifold satisfying $\sec_h \leq K$ for some $K\in \R$. Fix $p\in N$. Assume that $p$ admits a simply connected open neighborhood $U$ such that $U\subseteq B_R(p)$ for some $R\leq \varpi^K\wedge \CR_h(p)$. Then $\inj_h(p)\geq d(p,\partial U)$ and $\SR(p)\geq d(p,\partial U)$.
\end{Proposition}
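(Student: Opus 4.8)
The plan is to set $r := d(p,\partial U)$ and to verify, one by one, the three conditions isolated in Remark \ref{rk:expMaps} which together force $\exp_p \colon B_r(0) \to N$ to be a well-defined diffeomorphism onto its image; this gives $\inj_h(p) \ge r$ at once, and the bound on $\SR_h(p)$ will then follow by identifying the metric balls $B_{r'}(p)$, $r' \le r$, with geodesic balls. A preliminary observation I would record first is that $B_r(p) \subseteq U$, hence $\overline{B_{r'}(p)} \subseteq U$ for every $r' < r$: any curve from $p \in U$ to a point outside $U$ must meet $\partial U$, so a point of $N\setminus U$ at $h$-distance $< r$ from $p$ would produce a curve through $\partial U$ of length $< r = d(p,\partial U)$, a contradiction.

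Well-definedness of $\exp_p$ on $B_r(0)$ and absence of critical points there are immediate from Remark \ref{rk:expMaps}: $r \le R \le \CR_h(p)$ handles the former, and $r \le R \le \varpi^K$ together with $\sec_h \le K$ handles the latter via Conjugate Point Comparison. The real content is injectivity of $\exp_p$ on $B_r(0)$. Suppose for contradiction that $\exp_p(v) = \exp_p(w) =: q$ with $v \ne w$ and $|v|,|w| < r$. Then $\gamma_0(t) := \exp_p(tv)$ and $\gamma_1(t) := \exp_p(tw)$, $t \in [0,1]$, are distinct geodesics from $p$ to $q$, and each is contained in $\overline{B_{r'}(p)}$ for $r' := \max(|v|,|w|) < r$, hence in $U$ by the preliminary observation. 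As $U$ is open and simply connected (in particular path-connected), there is a path homotopy $G$ between $\gamma_0$ and $\gamma_1$ rel endpoints with $\Img(G) \subseteq U \subseteq B_R(p)$. Now I would invoke Klingenberg's Lemma \ref{Lemma: Klingenberg} with the parameter $R$ — which is legitimate since $R \le \varpi^K \wedge \CR_h(p)$ and $q \in B_{r'}(p) \subseteq B_R(p)$ — to obtain a point $G(\alpha_*,t_*)$ with $d(p, G(\alpha_*,t_*)) \ge R$, contradicting $\Img(G) \subseteq B_R(p)$. Hence $\exp_p$ is injective on $B_r(0)$, and, combining the three properties, $\exp_p \colon B_r(0) \to N$ is a diffeomorphism onto its image, so $\inj_h(p) \ge r = d(p,\partial U)$.

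For the simple connectedness radius, I would show $B_{r'}(p) = \exp_p(B_{r'}(0))$ for each $r' \in (0,r]$. The inclusion ``$\supseteq$'' is clear since $d(p,\exp_p(v)) \le |v|$. For ``$\subseteq$'', given $x$ with $d(p,x) < r'$, choose $\rho$ with $d(p,x) < \rho < r'$; then $\overline{B_\rho(p)}$ is compact, and a standard limiting argument (near-minimizing unit-speed curves from $p$ to $x$ eventually lie in the compact set $\overline{B_\rho(p)}$, so Arzelà--Ascoli yields a minimizing geodesic) exhibits $x$ as $\exp_p$ of a vector of norm $d(p,x) < r'$. Since $\exp_p$ restricts to an injective immersion on $B_{r'}(0) \subseteq B_r(0)$, this identifies $B_{r'}(p)$ with the open ball $B_{r'}(0) \subseteq T_pN$ via a diffeomorphism; in particular $B_{r'}(p)$ is simply connected. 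As $r' \in (0,r]$ was arbitrary, $\SR_h(p) \ge r = d(p,\partial U)$.

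The step I expect to be the main obstacle is the injectivity argument, and more precisely the bookkeeping around which radius to feed into Klingenberg's Lemma: one must observe that the two geodesics issuing from $p$ remain in the small balls $\overline{B_{r'}(p)} \subseteq U$ (so that simple connectedness of $U$ can be used to produce the homotopy $G$ at all), and then apply the lemma with the \emph{outer} radius $R$ rather than with $r$, since it is the containment $\Img(G) \subseteq B_R(p)$ that makes the conclusion of the lemma contradictory.
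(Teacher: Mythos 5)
Your proposal is correct and follows essentially the same route as the paper: rule out failure of well-definedness and critical points via $\CR_h(p)$ and conjugate point comparison, rule out non-injectivity by using simple connectedness of $U$ to produce a path homotopy between two distinct connecting geodesics and then contradicting Klingenberg's Lemma applied with the outer radius $R$ (so that the homotopy would have to leave $B_R(p)\supseteq U$), and deduce the bound on $\SR_h(p)$ from the fact that balls of radius below the injectivity radius are diffeomorphic images of Euclidean balls. The only difference is that you spell out the preliminary containment $B_r(p)\subseteq U$ and the local Hopf--Rinow step in more detail than the paper does.
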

\begin{proof}
    Assume, by way of contradiction, that $\inj_h(p)<r:=d(p,\partial U)$. Using conjugate point comparison \cite[Thm.\ 11.12]{LeeRiem}, we infer that $\exp_p$ is well defined and admits no critical points in $B_R(0)\supseteq B_r(0)$. Hence, by Remark \ref{rk:expMaps}, there must exist two distinct geodesics $\gamma_0,\gamma_1$ in $B_r(p)$ connecting $p$ to some point $q\in B_r(p)$. Now, as $U\supseteq B_r(p)$ is simply connected, $\gamma_0$ is path homotopic to $\gamma_1$ within $U$. This however directly contradicts Lemma \ref{Lemma: Klingenberg} which implies that any path homotopy $G$ from $\gamma_0$ to $\gamma_1$ admits a point $G(\alpha_*,t_*)$ that lies outside of $B_R(p) \supseteq U$.
    
    Finally, the second inequality follows directly from the observation that $B_a(p)$ is contractible as soon as $a\leq \inj_h(p)$.
\end{proof}

\begin{Lemma}\label{lem:injEstCAT2}
    Let $(N,h)$ be a smooth connected Riemannian manifold satisfying $\sec_h \leq K$ for some $K\in \R$. Fix $p,q\in N$ and suppose $r\in \R^+$ satisfies
    \begin{align*}
        d(p,q) + r &\leq \varpi^K \wedge \CR_h(p)\wedge \SR_h(p)\\
        2d(p,q) + r &\leq \varpi^K \wedge \CR_h(q).
    \end{align*}
    Then $\SR_h(q) \geq r$, in particular $B_r(q)$ is simply connected, and $\inj_h(q)\geq r$.
\end{Lemma}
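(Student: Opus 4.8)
The plan is to reduce everything to Proposition \ref{thm:injEstCAT} applied \emph{with base point $q$}. For that I need to exhibit a simply connected open neighbourhood of $q$ contained in a metric ball around $q$ of radius at most $\varpi^K \wedge \CR_h(q)$. The natural choice, which conveniently inherits simple connectedness from the hypotheses on $p$, is the ball $U := B_{d(p,q)+r}(p)$ centred at $p$.

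First I would verify that $U$ is admissible in the sense required by Proposition \ref{thm:injEstCAT}. It is open and, since $r>0$, it contains $q$. From the first hypothesis we have in particular $\SR_h(p) \geq d(p,q)+r$; recalling that the supremum defining $\SR_h(p)$ is attained whenever finite, this says precisely that $B_s(p)$ is simply connected for all $s \in (0, d(p,q)+r]$, so $U$ itself is simply connected. Next, by the triangle inequality every $x\in U$ satisfies $d(x,q) \leq d(x,p) + d(p,q) < 2d(p,q)+r$, whence $U \subseteq B_R(q)$ for $R := 2d(p,q)+r$; and the second hypothesis gives exactly $R \leq \varpi^K \wedge \CR_h(q)$. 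Proposition \ref{thm:injEstCAT} therefore applies and yields $\inj_h(q) \geq d(q,\partial U)$ and $\SR_h(q) \geq d(q,\partial U)$.

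It remains to bound $d(q, \partial U)$ from below by $r$. Since $U$ is an open metric ball, $\partial U$ is contained in the metric sphere $\{x : d(x,p) = d(p,q)+r\}$ (and if $\partial U = \emptyset$, which by connectedness of $N$ would force $U = N$, then $d(q,\partial U) = +\infty$ and there is nothing to prove). Hence for any $z \in \partial U$ the reverse triangle inequality gives $d(q,z) \geq d(p,z) - d(p,q) = r$, so $d(q,\partial U) \geq r$. Combining, $\inj_h(q) \geq r$ and $\SR_h(q) \geq r$; the latter --- or equally well the standard fact that $B_a(q)$ is contractible once $a \leq \inj_h(q)$ --- shows that $B_r(q)$ is simply connected, and the proof is complete.

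There is no genuine obstacle here: once Proposition \ref{thm:injEstCAT} is available this is a bookkeeping argument. The only points deserving attention are matching each radius constraint to the correct base point (simple connectedness is controlled at $p$, the curvature/completeness threshold at $q$), invoking the ``maximum'' property of $\SR_h$ so that the boundary case $\SR_h(p) = d(p,q)+r$ is retained, and noting that the extra $d(p,q)$ in the coefficient $2d(p,q)+r$ is exactly what is needed to absorb the off-centredness of $U$ with respect to $q$ in the inclusion $U \subseteq B_{2d(p,q)+r}(q)$.
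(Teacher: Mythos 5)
Your proof is correct. Where the paper reruns the Klingenberg contradiction argument from scratch at the point $q$ (two distinct geodesics in $B_r(q)$, homotoped inside $B_{r+d(p,q)}(p)$, with Lemma \ref{Lemma: Klingenberg} applied at $q$ with radius $r+2d(p,q)$ forcing the homotopy to escape), you instead reduce the whole statement to a single application of Proposition \ref{thm:injEstCAT} with base point $q$ and the off-centred neighbourhood $U=B_{d(p,q)+r}(p)$. All the hypotheses you need to check do hold: $U$ is simply connected by the first inequality together with the max-attainment of $\SR_h(p)$, the inclusion $U\subseteq B_{2d(p,q)+r}(q)$ follows from the triangle inequality, the threshold $2d(p,q)+r\leq \varpi^K\wedge\CR_h(q)$ is exactly the second hypothesis, and $d(q,\partial U)\geq r$ because $\partial U$ lies on the sphere of radius $d(p,q)+r$ about $p$. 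Unwinding the proof of Proposition \ref{thm:injEstCAT} with these data reproduces the paper's argument verbatim, so the two proofs coincide at bottom; what your packaging buys is economy and a clear explanation of why the coefficient $2d(p,q)+r$ appears (it absorbs the off-centredness of $U$ relative to $q$), at the cost of leaning on the proposition as a black box. Your handling of the boundary cases ($\SR_h(p)$ finite and attained, $\partial U=\emptyset$) is careful and correct.
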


\begin{proof}
    For the sake of contradiction, assume that $\inj_h(q)< r$. As in the previous proof, it follows that $\exp_q$ is well defined and admits no critical points in $B_r(0)$ and so there must exist two distinct geodesics $\gamma_0,\gamma_1$ in $B_r(q)$ connecting $q$ to some point in $B_r(q)$. As $B_{r+d(p,q)}(p)$ is simply connected, $\gamma_0$ and $\gamma_1$ are path homotopic in $B_{r+d(p,q)}(p)$. Hence, Lemma \ref{Lemma: Klingenberg} implies the existence of a point $G(\alpha_*,t_*)$ such that $d(q,G(\alpha_*,t_*))\geq r+2d(p,q)$ on any path homotopy $G$ from $\gamma_0$ to $\gamma_1$. This, however, is not possible as the triangle inequality would then imply $G(\alpha_*,t_*)\notin B_{r+d(p,q)}(p)$. We can therefore conclude that $\inj_h(q)\geq r$ and so, as in Proposition \ref{thm:injEstCAT}, $\SR_h(q) \geq r$.
    \end{proof}

\begin{Proposition}\label{thm:convEstCAT}
    Let $(N,h)$ be a smooth connected Riemannian manifold satisfying $\sec_h \leq K$ for some $K\in \R$. Fix $p\in N$ and let $r\in\R^+ ,\, r\leq  \frac{1}{5}\left(\varpi^K \wedge \CR_h(p)\wedge \SR_h(p) \right)$. Then the following hold:
    \begin{enumerate}
    \item Between any two points $x,y\in B_r(p)$ there exists a unique minimizing geodesic.
    \item The minimizing geodesics in $B_r(p)$ vary continuously with their endpoints.
    \item $B_r(p)$ is convex.
    \end{enumerate}
\end{Proposition}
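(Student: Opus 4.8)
The plan is to upgrade the single smallness hypothesis on $r$ into \emph{uniform} lower bounds for the completeness, injectivity, and simple connectedness radii at \emph{every} point of $B_r(p)$ by means of Lemma~\ref{lem:injEstCAT2}, and then to deduce (i)--(iii) from classical local Riemannian geometry --- minimizing geodesics within the injectivity radius, smooth dependence of the exponential map, and convexity of $\tfrac12 d_p^2$ by Hessian comparison --- each of which only needs the relevant exponential map to be defined on a sufficiently large ball, which is precisely what the completeness radius controls. Concretely, write $\rho:=\varpi^K\wedge\CR_h(p)\wedge\SR_h(p)$, so that $5r\le\rho$. Since a closed ball around $q$ is a closed subset of a suitable closed ball around $p$, one has $\CR_h(q)\ge\CR_h(p)-d(p,q)\ge\rho-d(p,q)>4r$ for every $q\in B_r(p)$. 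Fixing such a $q$ and applying Lemma~\ref{lem:injEstCAT2} with its ``$p$'' and ``$q$'' equal to our $p$ and $q$ and with radius $r':=\rho-3d(p,q)$ --- note that $r'>2r>0$ because $d(p,q)<r\le\rho/5$ --- the two hypotheses of that lemma become $\rho-2d(p,q)\le\rho$ and $\rho-d(p,q)\le\varpi^K\wedge\CR_h(q)$, both of which hold by the definition of $\rho$ and the bound just proven. The lemma thus yields $\inj_h(q)\ge r'>2r$ and $\SR_h(q)\ge r'>2r$ for every $q\in B_r(p)$; taking $q=p$ also gives $\inj_h(p)\ge\rho\ge5r$.

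Next, fix $x,y\in B_r(p)$, so $d(x,y)\le d(x,p)+d(p,y)<2r$, which by the above is strictly below $\inj_h(x)\wedge\CR_h(x)$, and $\overline{B_{3r}(x)}$ is compact. The map $\Phi\colon(x,v)\mapsto(x,\exp_x v)$, restricted to the open set $\{(x,v):x\in B_r(p),\ |v|_h<2r\}$, is smooth, injective (since $\inj_h(x)>2r$) and a local diffeomorphism (since, by Remark~\ref{rk:expMaps} and $2r<\varpi^K$, $\exp_x$ has no critical point in $B_{2r}(0)$), hence a diffeomorphism onto its open image. A constant-speed length-minimizing sequence of curves from $x$ to $y$ eventually lies in $\overline{B_{3r}(x)}$ and is uniformly Lipschitz, so by Arzel\`a--Ascoli and lower semicontinuity of length there is a minimizing geodesic $\gamma_{xy}$ from $x$ to $y$; it has the form $t\mapsto\exp_x(tv)$ with $|v|_h=d(x,y)<2r$, so $(x,y)$ lies in the image of $\Phi$ and $v=\exp_x^{-1}(y)$ is determined by $(x,y)$, whence any other minimizing geodesic from $x$ to $y$, also of length $<2r$, coincides with $\gamma_{xy}$. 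This proves (i); and since $\Phi^{-1}$ is smooth on its image, which contains $B_r(p)\times B_r(p)$, the map $(x,y,t)\mapsto\exp_x(t\,\exp_x^{-1}(y))=\gamma_{xy}(t)$ is smooth, in particular continuous, in $(x,y)$, proving (ii).

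Finally, for (iii), put $f:=\tfrac12 d(p,\cdot)^2$, which is smooth on $B_{2r}(p)\subseteq B_{\inj_h(p)}(p)$ by the bound $\inj_h(p)\ge5r$. Since $2r\le\tfrac25\rho\le\tfrac25\varpi^K<\tfrac12\varpi^K$, Hessian comparison for manifolds with $\sec\le K$ (see \cite{petersen}) gives $\Hess f\ge0$ on $B_{2r}(p)$. For $x,y\in B_r(p)$ and $\gamma:=\gamma_{xy}$ from (i), minimality of every sub-arc gives $d(p,\gamma(t))\le d(p,x)+t\,d(x,y)<r+2rt$ and, symmetrically, $d(p,\gamma(t))<r+2r(1-t)$, whence $d(p,\gamma(t))<r+2r\min(t,1-t)\le2r$; thus $\gamma([0,1])\subseteq B_{2r}(p)$. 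Consequently $t\mapsto f(\gamma(t))$ is convex on $[0,1]$, so $f(\gamma(t))\le\max\{f(x),f(y)\}<\tfrac12 r^2$, i.e.\ $d(p,\gamma(t))<r$ for all $t$; hence $\gamma([0,1])\subseteq B_r(p)$, so $B_r(p)$ is convex. The one genuinely delicate point is the bookkeeping in the first paragraph: one must feed Lemma~\ref{lem:injEstCAT2} precisely the radius $r'=\rho-3d(p,q)$, and the constant $\tfrac15$ is tuned exactly so that this simultaneously forces $\inj_h(q)>2r$ throughout $B_r(p)$ \emph{and} keeps $2r$ below $\tfrac12\varpi^K$ --- the threshold past which $\Hess(\tfrac12 d_p^2)$ may cease to be nonnegative. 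Everything else is routine local Riemannian geometry, invoked in forms valid on incomplete manifolds thanks to the completeness-radius bounds.
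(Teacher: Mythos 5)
Your proof is correct, and for parts (i) and (ii) it follows the same strategy as the paper: feed Lemma \ref{lem:injEstCAT2} a suitable radius at each $q\in B_r(p)$ to obtain $\inj_h(q)>2r$ uniformly (the paper uses the bound $\CR_h(q)\wedge\varpi^K\ge \frac45\rho\ge 2r+2d(p,q)$ with radius $2r$, you use $r'=\rho-3d(p,q)$; both satisfy the lemma's hypotheses), after which uniqueness and continuous dependence are routine. Where you genuinely diverge is part (iii). The paper runs a connectedness argument: it defines $C_x$ as the set of $y\in B_r(p)$ whose minimizing geodesic from $x$ stays in $B_r(p)$, shows $C_x$ is open via (ii), and shows it is closed by passing to a limit geodesic whose image is a priori only in $\overline{B_r(p)}$ and then applying Hessian comparison to $\varphi(t)=d(p,\gamma(t))$. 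You instead get the needed a priori containment directly: the triangle inequality together with minimality of sub-arcs forces any minimizing geodesic between points of $B_r(p)$ into $B_{2r}(p)$, where $\tfrac12 d_p^2$ is smooth and convex (your check $2r<\tfrac12\varpi^K$ is exactly the threshold the paper also needs for $s_K'\ge 0$), and convexity then pulls the geodesic back into $B_r(p)$. This avoids the open--closed decomposition entirely and is arguably cleaner; the price is that you must work on the larger ball $B_{2r}(p)$, which your radius bookkeeping (in particular $\inj_h(p)\ge 5r$) comfortably accommodates. Both arguments are valid.
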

\begin{proof}
    We first observe that, by definition of the completeness radius, for all $q\in B_r(p)$, we have $\CR_h(q)\geq \frac{4}{5}\CR_h(p)$ and so    \begin{align*}
        \CR_h(q)\wedge \varpi^K \geq \frac{4}{5}\left(\varpi^K \wedge\CR_h(p)\wedge  \SR_h(p)\right)\geq 4r \geq 2r + 2d(p,q).
    \end{align*}
    Thus, we can apply Lemma \ref{lem:injEstCAT2} to get $\inj_h(q) \geq 2r$. In particular, for every pair $x,y\in B_r(p)$ we have $d(x,y) < \inj_h(x)$, which proves points \emph{(i)} and \emph{(ii)}. Now, for $x\in B_r(p)$, define
    \begin{align*}
        C_x:= \left\{y\in B_r(p): \text{the minimizing geodesic $\gamma$ from $x$ to $y$ satisfies } \Img\gamma\subseteq B_r(p) \right\}.
    \end{align*}
    By item \emph{(ii)}, the set $C_x$ is open in $B_r(p)$, and it is non-empty since $x\in C_x$. To prove that it is also closed, let $\{y_n\}_{n\in\N} \subseteq C_x$ such that $y_n\to y\in B_r(p)$ and denote by $\gamma$ the minimizing geodesic connecting $x$ to $y$. Again by $(ii)$, we know that $\Img \gamma \subseteq \overline{B_r(p)}$ and we can assume w.l.o.g.\ that $\gamma$ is parametrized on the interval $[0,1]$. Applying Hessian Comparison \cite[Thm.\ 11.7]{LeeRiem} (noting that, in their notation, $s'_{K} (r)$ is non-negative for $r< \frac{1}{2}\varpi^{K}$ and in our case $d(p,\gamma(t))<\frac{1}{2}\varpi^{K}$ for all $t \in [0,1]$), we get that
    \begin{align*}
        \varphi''(t)&= \Hess_{\gamma(t)}(d(p,\,\cdot\,))[\gamma'(t), \gamma'(t)]\geq 0, \text{ where} \\
        \varphi(t)&:= d(p,\gamma(t)).
    \end{align*}
    Hence, $\varphi$ is convex, and so
    \begin{align*}
        \max_{t\in [0,1]} \varphi(t) = \max\{\varphi(0), \varphi(1)\} = \max\{d(p,x), d(p,y))\}<r,
    \end{align*}
    implying that $C_x$ is closed. Thus, we can conclude $C_x=B_r(p)$ for any $x \in B_r(p)$, which is precisely claim \emph{(iii)}.
\end{proof}

We define the \emph{total normality radius} at $p\in N$ as
\begin{align*}
    \tnorm_h(p)&:=\sup\{ R\in \R^+: B_r(p)\text{ satisfies \emph{(i), (ii), (iii)} from Theorem \ref{thm:convEstCAT} }\forall r\in (0,R]\},\\
    \tnorm_h(p)&\in \R^+ \cup \{+\infty\}.
\end{align*}
Observe that when $\tnorm_h(p)$ is finite, the supremum is actually a maximum. Let us conclude the section with the following compact rephrasing of the previous results.
\begin{Proposition}\label{thm:injconvCAT}
    For any smooth connected Riemannian manifold $(N,h)$ with $\sec_h\leq K$ and any $p\in N$ it holds that
    \begin{align*}
        \inj_h(p)&\geq \varpi^K \wedge \CR_h(p)\wedge \SR_h(p),\\ 
        \tnorm_h(p)&\geq \frac{1}{5}\left(\varpi^K \wedge \CR_h(p)\wedge \SR_h(p)\right).
    \end{align*}
\end{Proposition}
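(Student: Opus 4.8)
The plan is to deduce both inequalities directly from Proposition~\ref{thm:injEstCAT} and Proposition~\ref{thm:convEstCAT}; no new geometric input is needed, only a careful assembly together with an elementary observation about distances to boundary spheres.

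For the injectivity radius estimate, I would fix an arbitrary $r\in\R^+$ with $r<\varpi^K\wedge\CR_h(p)\wedge\SR_h(p)$ and take $U:=B_r(p)$. Since $r<\SR_h(p)$, the definition of the simple connectedness radius guarantees that $B_r(p)$ is simply connected; moreover $U\subseteq B_r(p)$ with $r\leq\varpi^K\wedge\CR_h(p)$, so Proposition~\ref{thm:injEstCAT} applies and yields $\inj_h(p)\geq d(p,\partial U)$. It then remains to note that $d(p,\partial B_r(p))\geq r$: if $\partial B_r(p)=\emptyset$ this holds by the convention $\inf\emptyset=+\infty$, and otherwise any $q\in\partial B_r(p)=\overline{B_r(p)}\setminus B_r(p)$ satisfies $d(p,q)\leq r$ (being a limit of points of $B_r(p)$) and $d(p,q)\geq r$ (not lying in the open ball), whence $d(p,q)=r$. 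Thus $\inj_h(p)\geq r$ for every such $r$, and taking the supremum over $r$ gives the first inequality.

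For the total normality radius, I would simply invoke Proposition~\ref{thm:convEstCAT}: for every $r\in(0,\tfrac{1}{5}(\varpi^K\wedge\CR_h(p)\wedge\SR_h(p))]$ the ball $B_r(p)$ satisfies properties \emph{(i)}, \emph{(ii)} and \emph{(iii)} of that proposition, so the very definition of $\tnorm_h(p)$ gives $\tnorm_h(p)\geq\tfrac{1}{5}(\varpi^K\wedge\CR_h(p)\wedge\SR_h(p))$.

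I do not expect a genuine obstacle here, since everything reduces to bookkeeping with the three radii and the sup-versus-max subtleties already recorded in the preceding definitions. The only point deserving explicit attention is the degenerate case $\CR_h(p)=+\infty$ with $N$ compact, where the ball eventually exhausts $N$ and has empty topological boundary; this is again handled by the convention $\inf\emptyset=+\infty$, so that the claimed inequalities then hold trivially.
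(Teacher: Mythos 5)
Your proposal is correct and takes essentially the same route as the paper, which offers no written proof and presents the proposition as a ``compact rephrasing'' of Propositions \ref{thm:injEstCAT} and \ref{thm:convEstCAT}. Your assembly --- taking $U=B_r(p)$ for $r<\varpi^K\wedge\CR_h(p)\wedge\SR_h(p)$, observing $d(p,\partial B_r(p))\geq r$, and reading the second bound directly off the definition of $\tnorm_h(p)$ via Proposition \ref{thm:convEstCAT} --- is exactly the intended derivation.
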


\section{Distributional bounds imply Alexandrov bounds}\label{Section: distrib implies alexandrov}

\subsection{Lower curvature bounds}\label{subsec: lower}
In this subsection, our main goal is to establish that the induced length space of a Riemannian metric of regularity $C^1$ satisfying a distributional lower sectional curvature bound is an Alexandrov space with the same sectional curvature bound in the sense of triangle comparison. More precisely, we aim to prove the following theorem.
\begin{Theorem}[Distributional lower bounds imply locally $\CBB$ in $C^1$] \label{ThmLowerBound}
    Let $(M,g)$ be a smooth connected manifold endowed with a $C^1$-Riemannian metric and denote by $d_g$ the Riemannian distance induced by $g$. Let $k \in \R$ and suppose $(M,g)$ satisfies a distributional sectional curvature bound from below by $k$. Then $(M,d_g)$ is locally $\CBB(k)$.
\end{Theorem}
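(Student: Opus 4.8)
The plan is to reduce, by means of the smooth approximating metrics $g_n$, to the classical (local) Toponogov comparison theorem for the $g_n$, and then to pass to the limit, controlling the model angles through the uniform convergence $d_{g_n}\to d_g$ from \eqref{eq:distance_sandwich2}.

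\emph{Localization and set-up.} By Remarks \ref{RemarkLocalization} and \ref{RemarkLocality} it suffices, for a fixed $p\in M$, to produce a radius $r>0$ with $r\le\tfrac13 d_g(p,\partial U)$ for a suitable relatively compact neighbourhood $U\ni p$, such that the ball $B^{d_g}_r(p)$ (with the restricted metric) is a $k$-comparison region. I would fix such a $U$ together with a compact $L$ with $\overline{B^{d_g}_{8r}(p)}\subseteq L\Subset U$ (for all small $r$), so that for any admissible quadruple $(q,x_1,x_2,x_3)$ in $B^{d_g}_r(p)$ every $d_{g_n}$-minimizing geodesic joining two of these points stays inside $L$; existence of such minimizers follows from a local Hopf--Rinow argument, since small closed $d_{g_n}$-balls are compact by the sandwich \eqref{eq:distance_sandwich2}.

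\emph{Approximate bounds and smooth comparison.} By Proposition \ref{Prop: Localseccurvapprox}, for every $\delta>0$ there is $n_0(\delta)$ with $\cR_n\ge(k-\delta)g_n$ on $L$ whenever $n\ge n_0(\delta)$; thus each $(L,g_n)$ is a smooth Riemannian manifold with $\sec\ge k-\delta$. The key point I need is that around $p$ one can choose $\CBB(k-\delta)$-comparison regions for the $g_n$ of a size $\rho>0$ that is \emph{independent of $n$} (and of $\delta$), say $\rho\ge 2r$. Granting this, pick for such a quadruple $d_{g_n}$-minimizing geodesics $\sigma_i$ from $q$ to $x_i$; since three unit vectors in $T_qM$ have pairwise $\measuredangle$'s summing to at most $2\pi$, the hinge form of Toponogov's theorem (actual angle $\ge$ model angle, valid for $\sec\ge k-\delta$) gives
$\sum_{(i,j)}\cang{k-\delta}_{d_{g_n}}(q;x_i,x_j)\le\sum_{(i,j)}\measuredangle_q(\sigma_i'(0),\sigma_j'(0))\le 2\pi.$
Since $d_{g_n}\to d_g$ uniformly on compacta and $\varpi^{k-\delta}\ge\varpi^{k}$, admissibility of the quadruple for $(d_g,k)$ forces admissibility for $(d_{g_n},k-\delta)$ once $n$ is large, so the above applies. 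Letting $n\to\infty$ and using the continuity of model angles under uniform convergence of metrics (Remark \ref{RemarkAngleContinuity}) yields $\sum\cang{k-\delta}_{d_g}(q;x_i,x_j)\le 2\pi$, and then letting $\delta\downarrow 0$ and using continuity of $k\mapsto\cang{k}_{d_g}(q;x_i,x_j)$ (again Remark \ref{RemarkAngleContinuity}) gives $\sum\cang{k}_{d_g}(q;x_i,x_j)\le 2\pi$. As the quadruple was arbitrary, $B^{d_g}_r(p)$ is a $k$-comparison region.

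\emph{Main obstacle.} The delicate step is the uniform-in-$n$ lower bound on the size of the comparison regions of the $g_n$ near $p$. The incomplete-manifold results of Subsection \ref{subsec: incompleteRiemann} (Propositions \ref{thm:convEstCAT}, \ref{thm:injconvCAT}) reduce this to uniform lower bounds on $\CR_{g_n}(p)$, $\SR_{g_n}(p)$ and on a conjugate/injectivity-type radius; the first two follow from \eqref{eq:distance_sandwich2} and the $C^1$-convergence $g_n\to g$ (whence $\Gamma_{g_n}\to\Gamma_g$ in $C^0(L)$), but the last is genuinely problematic because $\sec_{g_n}$ need not be bounded above, the distributional $\Riem$ of a $C^1$ metric being only an order-one tensor distribution. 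What I expect to save the argument is that a distributional lower bound $\cR\ge kg$ forces $\Riem$ to be \emph{order zero}, i.e.\ a tensor-valued signed measure (a nonnegative distribution is a measure, and one then concludes by polarization); hence the mollified curvatures $\sec_{g_n}$ have uniformly bounded integral along short geodesics, and a Sturm-type comparison for Jacobi fields with such $L^1$ control on the coefficient delivers a uniform lower bound on the conjugate radius of $g_n$ at $p$. Combined with the above, this is enough to run the scheme; making this last point precise is where the real work lies.
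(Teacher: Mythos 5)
Your overall scheme (regularize, apply Proposition \ref{Prop: Localseccurvapprox} to get $\sec_{g_n}\ge k-\delta$ on a compact set, invoke Toponogov for $g_n$, pass to the limit via Remark \ref{RemarkAngleContinuity} and \eqref{eq:distance_sandwich2}) is exactly the paper's, and you correctly isolate the one genuinely delicate point: a lower bound, \emph{uniform in $n$}, on the size of the $\CBB(k-\delta)$-comparison neighbourhoods of $g_n$ around $p$. But your proposed resolution of that point is where the proof breaks down. Routing the size estimate through $\CR_{g_n}$, $\SR_{g_n}$ and a conjugate/injectivity radius requires upper curvature control, which, as you note, is simply not available; and the rescue you sketch does not close the gap. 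Even granting that $\cR\ge kg$ forces the components of $\Riem$ to be locally finite measures (which is correct, by nonnegativity plus polarization), a bound on the total mass of the curvature measure in a neighbourhood does \emph{not} yield a bound on $\int|\sec_{g_n}|$ along \emph{every} short geodesic: the positive part of the measure can concentrate near lower-dimensional sets, and geodesics passing close to or tangentially along such a concentration set pick up arbitrarily large integrals of the mollified curvature. So the Sturm/Lyapunov-type conjugate point estimate you invoke has no uniform input, and the "real work" you defer is not a technicality — it is the missing step, and it is not clear it can be done along these lines.

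The paper avoids this entirely by staying synthetic at the crucial moment. Once Toponogov gives that $(U,d^n_U)$ is \emph{locally} $\CBB(k_n)$ (which needs no uniform neighbourhood size — any smooth manifold with $\sec\ge k_n$ is locally $\CBB(k_n)$ on possibly tiny, $n$-dependent neighbourhoods), one applies a quantitative local-to-global result for lower curvature bounds, namely Plaut's Proposition 46: in a complete length space in which the comparison radius $c^{k}$ is positive on a ball $B_r(p)$, one automatically has $c^{k}(p)\ge r/3$. Proposition \ref{PropMinimalSize} packages this (working in the completion $\overline U$ and using the $1$-Lipschitz continuity of $q\mapsto c^{k}(q)$ to get a uniform positive lower bound on a slightly smaller ball) into $c^{k_n}(p)\ge\frac19\,d^n(p,\partial U)$, which by \eqref{eq:distance_sandwich2} is bounded below uniformly in $n$. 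No injectivity radius, no upper curvature bound, no structure of the curvature as a measure is needed. I would recommend replacing your "main obstacle" paragraph by this globalization argument; the rest of your write-up (including the hinge-sum derivation of the $(1{+}3)$-point comparison and the two limit passages $n\to\infty$, $\delta\downarrow 0$) is sound once comparison regions of uniform size are in hand.
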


\begin{Remark}\label{rk:localization}
    The above theorem can be generalized to the case of \textit{local bounds} on the distributional sectional curvature of $(M,g)$ in the sense of Definition \ref{def:distrSec}. In fact, in this case, for every relatively compact and open $U \subseteq M$, we would have    \begin{equation*}
        \mathcal{R} \geq k_U g \quad \text{in } \mathcal{D}'(U)
    \end{equation*}
    for some $k_U\in \R$. Applying the theorem as stated to $(U,g)$ gives us that $(U,d_U)$ is locally $\CBB(k_U)$. By Remark \ref{RemarkLocality}, we can then conclude that $(M,d_g)$ is \textit{variably} $\CBB$.
    
\end{Remark}

In the proof of this theorem, we will make essential use of the fact that comparison regions in locally $\CBB(k)$ spaces have a certain minimal relative size. To state this more explicitly, suppose $(X, d)$ is a length space which locally satisfies $\CBB(k)$-comparison, let $p \in X$ and consider the \emph{k-comparison radius}
\begin{equation*}
    c^k(p):= \sup \{ r > 0  :  B_r(p) \text{ is a \textit{k}-comparison region} \}.
\end{equation*}
It is easy to see that, for $p,q \in X$, we necessarily have $c^k(q) \leq c^k(p) + d(p,q)$. Indeed, otherwise the $k$-comparison region $B_{c^k(q)}(q)$ would properly contain $B_{c^k(p)}(p)$, contradicting maximality of $c^k(p)$. By symmetry, it then clearly also holds that $c^k(q) \geq c^k(p) - d(p,q)$, implying that $p \mapsto c^k(p)$ is $1$-Lipschitz \emph{continuous}. 

We now want to establish a local lower bound for $c^k(p)$, employing a result by Plaut \cite[Prop.\ 46]{PlautBook}, which reads as follows:

\begin{Proposition}
    Let $(X,d)$ be a complete length space and let $p \in X$. Suppose that $c^k \geq \kappa$ uniformly on a ball $B_r(p)$ of radius $r >0$ for some $\kappa > 0$. Then $c^k(p) \geq \frac{r}{3}$.
\end{Proposition}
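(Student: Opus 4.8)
The plan is to reduce the claimed lower bound $c^k(p)\ge r/3$ to Plaut's result by passing to a sufficiently small closed metric ball around $p$, which is itself a complete length space. First I would fix the radius $r>0$ and the uniform lower bound $c^k\ge\kappa>0$ on $B_r(p)$. Since $p\mapsto c^k(p)$ is $1$-Lipschitz and positive on $B_r(p)$, the relevant comparison regions around nearby points have controlled size, so $\CBB(k)$-comparison holds in a genuinely open neighbourhood of every point of $B_r(p)$. The subtle point is that $c^k$ is defined with respect to the ambient metric $d$, whereas Plaut's proposition should be applied inside a \emph{complete} length space; the natural candidate is a closed ball $\overline{B_\rho(p)}$ for some $\rho<r$, equipped with the induced intrinsic metric.

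The key steps, in order, would be: (1) Choose $\rho\in(0,r)$ small enough (depending on $\kappa$ and $r$) so that for every $q\in\overline{B_\rho(p)}$ the ball $B_{\kappa'}(q)$ (with $\kappa'$ a fixed constant $<\kappa$, and $\kappa'$ also $\le\frac13\operatorname{dist}(q,X\setminus B_r(p))$) is a $k$-comparison region and, by Remark \ref{RemarkLocalization}, the restriction of $d$ to this ball agrees with the induced intrinsic metric. This makes $(\overline{B_\rho(p)},\hat d)$ into a length space in which $k$-comparison holds locally with comparison radius bounded below by a fixed $\kappa'>0$. (2) Verify completeness of $(\overline{B_\rho(p)},\hat d)$: a closed bounded ball in a length space need not be complete in general, so one must be slightly careful — but since we may shrink $\rho$ freely, and since local $\CBB(k)$ together with the comparison-radius bound gives local compactness of small balls (geodesics exist and the space behaves like a manifold-ish Alexandrov space locally), one obtains that closed balls of small enough radius are compact, hence complete. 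Alternatively, one argues directly inside $X$ if $X$ is already assumed complete in the intended application; in the generality stated I would invoke local compactness coming from the $\CBB$ structure with uniformly bounded comparison radius. (3) Apply Plaut \cite[Prop.\ 46]{PlautBook} to the complete length space $(\overline{B_\rho(p)},\hat d)$ with the ball $B_\rho(p)$ (or a slightly smaller one) on which $c^k\ge\kappa'$ uniformly, obtaining that the $k$-comparison radius \emph{within that space} at $p$ is at least $\rho/3$. (4) Translate this back to $X$: since $\hat d$ and $d$ coincide on small balls around $p$, a $k$-comparison region for $\hat d$ is one for $d$, so $c^k(p)\ge\rho/3$; finally, letting $\rho\uparrow r$ (the constants $\kappa'$ can be taken uniformly as long as they stay below $\kappa$ and below the distance to the boundary of $B_r(p)$, which is $\ge r-\rho$, so this forces a relation like $\kappa'\le r-\rho$) yields $c^k(p)\ge r/3$ in the limit, possibly after a harmless adjustment of constants.

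The main obstacle I expect is step (2): ensuring that the localized space is complete (equivalently, that small closed balls are compact) without assuming global properness of $X$. This is exactly the kind of ``incomplete manifold'' subtlety the authors have been careful about in Subsection \ref{subsec: incompleteRiemann}, and the honest fix is either (a) to observe that in the intended application $X=(M,d_g)$ arises from a metric on a manifold where one works on relatively compact charts, so local compactness is automatic, or (b) to note that a length space which is locally $\CBB(k)$ with a uniform positive comparison radius on a ball is locally compact there (small metric balls are precompact), which is a standard fact in Alexandrov geometry. A secondary, more bookkeeping-type difficulty is tracking the three competing constraints on the auxiliary radius — it must be below $\kappa$, below one-third the distance to $\partial B_r(p)$ (to use Remark \ref{RemarkLocalization}), and chosen so the final limit recovers exactly $r/3$ rather than some smaller fraction; this requires taking suprema carefully but involves no real difficulty.
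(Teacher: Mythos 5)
Your proposal is circular: the statement you are asked to prove \emph{is} Plaut's Proposition~46 --- the paper quotes it verbatim from \cite{PlautBook} and supplies no proof of its own --- yet your key step (3) consists of ``applying Plaut \cite[Prop.\ 46]{PlautBook}'' to a slightly smaller ball. All the genuine content of the proposition is the local-to-global step: the hypothesis only gives a comparison region of some fixed, possibly very small, radius $\kappa$ around each point of $B_r(p)$, and the conclusion produces a single comparison region of size $r/3$, comparable to the whole ball regardless of how small $\kappa$ is. Your steps (1), (2) and (4) are localization and completeness bookkeeping that leave this globalization entirely to the invoked result, so nothing is actually proved. A real proof would need an Alexandrov-type globalization argument (showing that the four-point/angle comparison propagates from $\kappa$-balls to large balls), which is the substance of Plaut's argument and is absent from your outline.

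It appears you have conflated this cited result with the proposition the paper \emph{does} prove immediately afterwards (Proposition~\ref{PropMinimalSize}, giving $c^k(p)\ge\frac19 d(p,\partial U)$ for a relatively compact $U$ that is locally $\CBB(k)$ in its induced length metric): your choice of a radius $\frac13\operatorname{dist}(\cdot\,,X\setminus B_r(p))$ via Remark~\ref{RemarkLocalization}, the Arzel\`a--Ascoli-style completeness discussion for a closed ball, and the final limiting argument in the auxiliary radius all closely mirror that proof. For \emph{that} statement your outline would be essentially the paper's approach (modulo the factor $\frac19$ versus $\frac13$ arising from first shrinking to a third of the distance to the boundary). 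For the statement actually posed, however, there is a genuine gap, not a bookkeeping issue.
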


\begin{Remark}
    In the above reference, Plaut's definition of bounded curvature invokes the notion of \emph{embedding curvature} attributed to Wald and Berestovskii. From this point of view, an open set $U$ within a metric space $X$ is considered to be a \emph{region of curvature $\geq k$} provided any quadruple of points in $U$ embeds isometrically into $\mathbb{M}^2(\ell)$ for some $\ell \geq k$, cf.\ \cite[Def.\ 22]{PlautBook}. To match this to our definition \eqref{CBBAngleCondition}, we invoke \cite[Prop.\ 23]{PlautBook}, which gives a direct translation of one into the other (note that our restriction on the perimeter of triangles is implicit in Plaut's formulation). Alternatively, we can follow the argument presented as a solution to Exercise 10.7 in \cite{AKPAlexandrov}, again keeping in mind our restriction on admissible quadruples.
\end{Remark}

\begin{Proposition} \label{PropMinimalSize}
    Suppose $U \subseteq X$ is open and relatively compact within a length space $(X,d)$ and is locally $\CBB(k)$ when considered with its own induced length metric. Then, for any $p \in U$, we have $c^k(p) \geq \frac{1}{9}d(p, \partial U)$.
\end{Proposition}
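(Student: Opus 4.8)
The plan is to bound $c^k(p)$ from below by assembling Remark~\ref{RemarkLocalization} (which makes the restricted metric and the induced intrinsic metric agree on small balls), compactness of small closed balls, the $1$-Lipschitz continuity of the comparison radius recalled before Plaut's Proposition, and Plaut's Proposition itself; the final factor $9$ will appear as $3\times 3$, one $3$ coming from Remark~\ref{RemarkLocalization} and one from Plaut.

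\emph{Reduction to a complete ambient space.} Plaut's Proposition is stated for complete length spaces, so I would first pass to the metric completion $\hat X$ of $(X,d)$, which is again a length space. Since $\overline U$ is compact it is already closed in $\hat X$, so $\overline U$, $\partial U$ and the number $d(p,\partial U)$ are unaffected; the induced intrinsic metric $d_U$ on $U$ is computed from the same paths, so $(U,d_U)$ is still locally $\CBB(k)$. Moreover every $k$-comparison ball around $p$ in $\hat X$ intersects $X$ in a $k$-comparison ball for $(X,d)$, so $c^k_X(p)\ge c^k_{\hat X}(p)$; hence it suffices to prove the estimate in $\hat X$, and we may assume $X$ complete.

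\emph{A uniform positive bound on a compact ball.} Write $R:=d(p,\partial U)$. Any path issuing from $p\in U$ and leaving $U$ must first meet $\partial U$, hence has length $\ge R$; therefore $\{x:d(p,x)<R\}\subseteq U$, and so $\{x:d(p,x)\le R/3\}$ is a closed subset of the compact set $\overline U$, hence compact, and it contains $\overline{B_{R/3}(p)}$. On this compact set the comparison radius $c^k$ of $(X,d)$ is well-defined, $1$-Lipschitz and strictly positive: for $q\in\overline{B_{R/3}(p)}\subseteq U$ we have $d(q,\partial U)\ge R-R/3=2R/3>0$, local $\CBB(k)$ of $(U,d_U)$ provides a $k$-comparison ball around $q$ for the metric $d_U$, and after shrinking its radius to some $\rho_q\le\tfrac13 d(q,\partial U)$ this ball is, by Remark~\ref{RemarkLocalization} (which gives $d=d_U$ on $B_{\frac13 d(q,\partial U)}(q)$), also a $k$-comparison region for $d$, so $c^k(q)\ge\rho_q>0$. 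Being continuous and positive on the compact set $\overline{B_{R/3}(p)}$, $c^k$ attains a minimum $\kappa>0$ there.

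\emph{Conclusion.} Since $c^k\ge\kappa>0$ uniformly on $B_{R/3}(p)$ and $X$ is a complete length space, Plaut's Proposition applies with $r=R/3$ and yields $c^k(p)\ge\tfrac13\cdot\tfrac{R}{3}=\tfrac19 d(p,\partial U)$, as claimed. I expect the reduction to a complete ambient space to be the only genuinely delicate point — one cannot simply invoke Plaut inside $(U,d_U)$, which need not be complete — while the rest is the routine chain above.
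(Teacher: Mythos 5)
Your proof is correct and follows the same chain as the paper's: the localization remark to identify $d$ with $d_U$ on balls of radius $\tfrac13 d(q,\partial U)$, compactness plus the $1$-Lipschitz continuity of $c^k$ to extract a uniform $\kappa>0$, and Plaut's proposition to convert that into a lower bound at $p$, with the factor $9$ arising as $3\times 3$ in both arguments. The single point where you genuinely diverge is the one you flag yourself: how to meet the completeness hypothesis in Plaut's proposition. The paper stays inside $(\overline{U}, d_{\overline{U}})$ and gets completeness from compactness of $\overline{U}$ via an Arzel\`a--Ascoli argument, at the cost of having to reconcile three metrics ($d$, $d_U$, $d_{\overline{U}}$) and of an $\varepsilon$-shrinkage of the radius that is removed at the end. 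You instead pass to the metric completion $\hat{X}$; this is also valid, but the burden shifts to checking that nothing near $p$ changes under completion --- that $\hat{X}$ is again a length space (standard), and that every point of $\hat{X}$ within distance $d(p,\partial U)$ of $p$ already lies in $U$ (which follows from compactness of $\overline{U}$ together with the length-space property of $X$, and deserves a sentence since \emph{a priori} the completion could insert new points arbitrarily close to $p$). Your version also avoids the paper's $\varepsilon$-argument by working directly with the compact set $\{x: d(p,x)\le R/3\}$, which is a small simplification. Either reduction is acceptable; just make the containment $B_{R}^{\hat{X}}(p)\subseteq U$ explicit rather than asserting that $\partial U$ is ``unaffected.''
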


\begin{proof}
    Denote by $d_{\Bar{U}}$ the length metric induced on $\Bar{U}$. Compactness of $(\Bar{U}, \restr{d}{\Bar{U}})$ and an Arzelà–Ascoli argument ensure that the length space $(\Bar{U}, d_{\Bar{U}})$ is complete. The curvature assumption $c^k > 0$ on $U$ is stated with respect to its induced length metric $d_U$, which need not coincide with $d_{\Bar{U}}$. However, by Remark \ref{RemarkLocalization}, the restrictions of both metrics agree on $B_{\Tilde{R}}(p)$, where $\Tilde{R} = \frac{1}{3}d(p,\partial U)$. Thus, setting $R:= \Tilde{R} - 3 \eps$ for some small
    $\varepsilon >0$, continuity of $q \mapsto c^k(q)$ and compactness of $\overline{B_R(p)} \subseteq U$ guarantee the existence of a uniform lower bound $\kappa > 0$ on the comparison radii of points in $\overline{B_R(p)}$. Therefore, \cite[Prop.\ 46]{PlautBook} applies and grants $c^k(p) \geq \frac{R}{3} = \frac{1}{9}d(p, \partial U) - \varepsilon$, which gives the claim by letting $\varepsilon \rightarrow 0$.
\end{proof}

\newpage
\begin{proof}[Proof of Theorem \ref{ThmLowerBound}]
We argue locally and by approximation, starting with the sequence of smooth Riemannian metrics $g_n$ constructed in Section \ref{section:Regularization} converging to $g$ in $C^1_\mathrm{loc}$ as $n \rightarrow \infty$. On any open and relatively compact set $U$, we can now bound the sectional curvatures of the metrics $g_n$ from below by $k - \delta_n$, where $\delta_n \to 0$ as $n \to \infty$. This is a consequence of Proposition \ref{Prop: Localseccurvapprox}.

Hence, by Toponogov's theorem, we may conclude that the length space $(U, d_U^n)$ with local induced intrinsic metric $d_U^n$ derived from $g_n$ satisfies local $\CBB(k_n)$-comparison, where $k_n := k -\delta_n$. That is, given any $p \in U$, we can find a $d_U^n$-comparison neighborhood $\Omega_n$ in which any admissible quadruple of points $(q,x,y,z)$ satisfies 
\begin{equation*}
    \cang{k_n}_{d^n}(q;x,y) + \cang{k_n}_{d^n}(q;y,z) + \cang{k_n}_{d^n}(q;z,x) \leq 2 \pi.
\end{equation*}
Recalling Remark \ref{RemarkAngleContinuity}, we would now like to pass to the limit as $k_n \nearrow k$ and $d^n \rightarrow d$ uniformly on $U\times U$, inferring that $(U, d_U)$ satisfies local $\CBB(k)$-comparison. Here, however, is where it becomes crucial to establish a minimal size of the $\Omega_n$'s, as they could otherwise narrow down indefinitely, potentially prohibiting $d_U$-comparisons around $p$. To this end, let $\eps >0$ and observe that, by uniform convergence of the distances as in \eqref{eq:distance_sandwich2}, we know that for $n$ large enough,
\begin{equation*}
    \lvert d^n(p, q) - d(p, q) \rvert < \eps \quad \text{for all } q \in U.
\end{equation*}
Hence, setting $R:=\frac{1}{9}d(p,\partial U)$ and $R_n:=\frac{1}{9}(d^n(p, \partial U) - \eps)$, Proposition \ref{PropMinimalSize} directly implies that $B^{d^n}_{R_n}(p)$ is a $k_n$-comparison region for $d^n$ and by definition $R_n \geq R - \eps$.
Choosing $\eps >0$ sufficiently small, we have thus found a positive lower bound on the radii of $k_n$-comparison balls, which is \emph{uniform} in $n$ and therefore allows us to pass to the limit as anticipated. Finally, the locality of the curvature condition discussed in Remark \ref{RemarkLocality} guarantees that, in fact, the whole manifold $M$ locally satisfies $\CBB(k)$.
\end{proof}

We conclude this subsection with the following result for locally Lipschitz metrics.

\begin{Theorem}[Distributional lower bounds imply variable $\CBB$ in $C^{0,1}_{\mathrm{loc}}$]
    Let $(M,g)$ be a smooth connected manifold endowed with a $C^{0,1}_{\loc}$-Riemannian metric and denote by $d_g$ the Riemannian distance induced by $g$. If the sectional curvature of $(M,g)$ is locally distributionally bounded from below, then $(M,d_g)$ is variably $\CBB$.
\end{Theorem}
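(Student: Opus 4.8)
The proof follows the blueprint of Theorem~\ref{ThmLowerBound}, with Proposition~\ref{Prop: Localseccurvapprox2} replacing Proposition~\ref{Prop: Localseccurvapprox}; the only genuine difference is that the loss of control on the curvature constants forces the weaker, \emph{variable}, conclusion. The plan is to argue locally. Fix $p\in M$ and choose nested open relatively compact sets $p\in U\comp V\comp M$ with $\overline U\subseteq V$. By Definition~\ref{def:distrSec} there is $k_V\in\R$ with $\cR\ge k_V g$ in $\D'(V)$, so I would work with the manifold $(V,g|_V)$ and the smooth approximating metrics $g_n$ constructed in Subsection~\ref{section:Regularization}. For a $C^{0,1}_\loc$ metric these converge to $g$ only uniformly on compact sets (not in $C^1$), but the distance sandwich~\eqref{eq:distance_sandwich2} still holds, so the induced intrinsic metrics $d_{g_n}$ converge uniformly on compact subsets to $d:=d_{g|_V}$.

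Next I would apply Proposition~\ref{Prop: Localseccurvapprox2} with the compact set $L:=\overline U\comp V$: there are a constant $K=K(\overline U)\in\R$ and $n_0\in\N$ such that $\cR_n\ge K g_n$ on $\overline U$ for all $n\ge n_0$. Thus $(U,g_n)$ is a smooth (and, in general, incomplete) Riemannian manifold with $\sec_{g_n}\ge K$, so the local form of Toponogov's comparison theorem gives that $(U,d^n)$, with its induced intrinsic metric $d^n$, is locally $\CBB(K)$ --- exactly as in the proof of Theorem~\ref{ThmLowerBound}, except that here the model-space parameter $K$ is \emph{fixed}, independent of $n$.

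It then remains to produce comparison regions whose radii are bounded below uniformly in $n$, and to pass to the limit. Passing to $\overline U$ with its induced length metric --- complete by an Arzel\`a--Ascoli argument, as in the proof of Proposition~\ref{PropMinimalSize} --- and combining the $1$-Lipschitz continuity of $q\mapsto c^K(q)$ with Plaut's estimate, Proposition~\ref{PropMinimalSize} shows that $B^{d^n}_{R_n}(p)$ is a $K$-comparison region for $d^n$, with $R_n\ge\tfrac19 d(p,\partial U)-\eps$ for $n$ large by~\eqref{eq:distance_sandwich2}. Hence a ball $B^{d^n}_{r}(p)$ of some fixed radius $r>0$ is a $K$-comparison region for all large $n$. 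Given a $d$-admissible quadruple of points lying in a slightly smaller ball $B^d_{r'}(p)$, $r'<r$, it is contained in $B^{d^n}_{r}(p)$ and is $d^n$-admissible for $n$ large (admissibility being an open condition stable under $d^n\to d$, since $\varpi^K$ does not move), so it satisfies the $\CBB(K)$ angle comparison with respect to $d^n$; letting $n\to\infty$ and using the continuity of the model angles $\cang{K}$ under uniform convergence of metrics (Remark~\ref{RemarkAngleContinuity}) --- here it is crucial that the parameter is the fixed $K$ rather than a moving $k_n$ --- the comparison passes to $d$. Thus $B^d_{r'}(p)$ is a $K$-comparison region for $d$; by Remark~\ref{RemarkLocalization} a sufficiently small ball around $p$ is then a bona fide $\CBB(K)$-comparison region for its own induced intrinsic metric, and since $p$ was arbitrary, Remark~\ref{RemarkLocality} (together with the fact that $d_V$ agrees locally with $d_g$) yields that $(M,d_g)$ is variably $\CBB$.

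The point distinguishing this argument from the $C^1$ case --- and, in that sense, the ``main obstacle'' --- is that Proposition~\ref{Prop: Localseccurvapprox2} yields no quantitative handle on $K$, and $K$ genuinely may depend on the compact set $\overline U$, hence on $p$; consequently one cannot recover a single constant $k$ and only obtains a point-dependent $K_p$, i.e.\ variable $\CBB$. Everything else is routine bookkeeping: incompleteness of $U$ and of the $g_n$ is harmless since every step is local, the completeness hypothesis of Plaut's proposition is recovered by passing to the closure $\overline U$, and no boundary case of admissibility is lost in the limit because $\varpi^K$ is constant.
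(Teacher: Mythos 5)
Your proposal is correct and follows essentially the same route as the paper: localize, invoke Proposition~\ref{Prop: Localseccurvapprox2} to get a fixed (point-dependent) bound $K$ for the approximating metrics, apply Toponogov, use Proposition~\ref{PropMinimalSize} to obtain comparison radii uniform in $n$, and pass to the limit via Remark~\ref{RemarkAngleContinuity}, concluding only variable $\CBB$ because $K$ depends on the neighborhood. The extra care you take with the nested sets $U\comp V$ and with admissibility in the limit is consistent with, and slightly more explicit than, the paper's argument, which simply refers back to the proof of Theorem~\ref{ThmLowerBound} for these steps.
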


\begin{proof}
    We can assume w.l.o.g.\ that the distributional sectional curvature bound holds globally on $M$ as we could otherwise localize the argument and apply Remark \ref{RemarkLocalization}. Start by fixing a point $p \in M$ and choose some open and relatively compact neighborhood $U$ of $p$. Denoting by $(g_n)_{n \in \N}$ the sequence of approximating metrics for $g$, we have
    \begin{equation*}
        \cR_n \ge Kg_n
        \quad \textnormal{on } U
    \end{equation*}
    for some $K \in \R$ 
    by Proposition \ref{Prop: Localseccurvapprox2}. Hence, Toponogov's theorem ensures that $(U, d^n_U)$ satisfies local $\CBB(K)$-comparison. As $d_n \rightarrow d$ uniformly on $U \times U$, cf.\ \eqref{eq:distance_sandwich}, we can again follow the argument presented in the proof of Theorem \ref{ThmLowerBound} to infer that the size of any $K$-comparison ball around $p \in U$ is bounded from below by some radius $r > 0$ uniformly in $n \in \N$. Passing to the limit, we thus conclude that $B_r(p)$ with its induced intrinsic metric is $\CBB(K)$ and by Remark \ref{RemarkLocality}, so is $(B_r(p), \restr{d}{B_r(p)})$ upon potential rescaling.
\end{proof}

\subsection{Upper curvature bounds} \label{subsec: above}
We now focus our attention on the analogous results for upper curvature bounds.
\begin{Theorem}[Distributional upper bounds imply locally $\CAT$ in $C^1$]\label{thm:CATmain}
    Let $(M,g)$ be a smooth connected manifold endowed with a $C^1$-Riemannian metric and denote by $d_g$ the Riemannian distance induced by $g$. Let $k\in \R$ and suppose $(M,g)$ has distributional sectional curvature bounded from above by $k$. Then $(M,d_g)$ is locally $\CAT(k)$.
\end{Theorem}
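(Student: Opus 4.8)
The plan is to run the same scheme as in the proof of Theorem~\ref{ThmLowerBound}: fix a point $p$, push the distributional bound through the smooth approximations $g_n$, and pass to the limit. The essential new difficulty is that for \emph{upper} bounds the smooth comparison theorem does not give a $\CAT$-condition globally but only on $\CAT(K_n)$-regions whose size is controlled by the injectivity and convexity radii of $g_n$. As $g_n$ restricted to a relatively compact domain is an \emph{incomplete} Riemannian manifold, this is precisely the setting of Subsection~\ref{subsec: incompleteRiemann}, and the core of the argument is to use those results to obtain a lower bound on the size of the comparison regions that is \emph{uniform in $n$} — the analogue of the Plaut-type estimate (Proposition~\ref{PropMinimalSize}) used in the $\CBB$ case.

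Concretely, fix $p\in M$ and a connected, relatively compact open $U\ni p$, and set $4a:=d_g(p,\partial U)>0$. Let $g_n\to g$ in $C^1_{\loc}$ be the approximating metrics. By Proposition~\ref{Prop: Localseccurvapprox}, $\sec_{g_n}\le K_n:=k+\delta_n$ on $\overline U$ for large $n$, with $\delta_n\downarrow 0$; fixing $K:=k+1\ge K_n$ we have $\varpi^{K_n}\ge\varpi^K$. Using the distance sandwich \eqref{eq:distance_sandwich2} one checks that $\overline{B^{g_n}_\rho(p)}$ is a compact subset of $U$ for all $\rho<2a$ and $n\ge2$, hence $\CR_{g_n}(p)\ge 2a$. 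For a uniform lower bound on $\SR_{g_n}(p)$, fix a chart $\psi$ around $p$ with $\psi(p)=0$ and put $W:=\psi^{-1}(B_{\rho_0}(0))$ for a small $\rho_0>0$ to be chosen; $W$ is simply connected, being diffeomorphic to a ball. Since $g$ is continuous and $g_n\to g$ uniformly on $\overline W$, for large $n$ the metrics $g_n$ are uniformly equivalent on $\overline W$ to the pullback Euclidean metric, so (i) choosing $\rho_0$ small we can ensure $\sup_{x\in\overline W}d_{g_n}(p,x)<\varpi^K\wedge 2a\le\varpi^{K_n}\wedge\CR_{g_n}(p)$, and (ii) $d_{g_n}(p,\partial W)\ge\beta_0$ for some fixed $\beta_0>0$. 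By (i), $W\subseteq B^{g_n}_R(p)$ with $R:=\varpi^{K_n}\wedge\CR_{g_n}(p)$, so Proposition~\ref{thm:injEstCAT} applied to $(U,g_n)$ gives $\SR_{g_n}(p)\ge d_{g_n}(p,\partial W)\ge\beta_0$. Hence $\varpi^{K_n}\wedge\CR_{g_n}(p)\wedge\SR_{g_n}(p)\ge r_0:=\min\{\varpi^K,2a,\beta_0\}>0$ for all large $n$.

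Put $r:=\tfrac15 r_0$. Proposition~\ref{thm:convEstCAT}, applied to $(U,g_n)$, then shows that $B^{g_n}_r(p)$ is convex and uniquely geodesic for every large $n$; being relatively compact in $U$, it is convex in $(M,g_n)$ as well. Since $r<\tfrac12\varpi^{K}\le\tfrac12\varpi^{K_n}$ and $\sec_{g_n}\le K_n$ on it, the classical (Cartan–Alexandrov–Toponogov) comparison theorem for upper curvature bounds (cf.\ Subsection~\ref{subsection: syntheticalexandrov}) yields that $B^{g_n}_r(p)$, equipped with the restriction of $d_{g_n}$ — which is an intrinsic metric there, by convexity — is a $\CAT(K_n)$ space. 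Now fix the $n$-independent open set $\Omega:=B^{g}_{r/2}(p)$; by \eqref{eq:distance_sandwich2}, $\Omega\subseteq B^{g_n}_r(p)$ for all $n$, so every quadruple in $\Omega$ satisfies $\CAT(K_n)$-comparison with respect to the distances $d_{g_n}$. Letting $n\to\infty$ and using $d_{g_n}\to d_g$ uniformly on $\overline U\times\overline U$, $K_n\to k$, together with the joint continuity of model angles in $(k,d)$ from Remark~\ref{RemarkAngleContinuity}, the comparison inequalities pass to the limit (a quadruple that is not $(d_g,k)$-admissible satisfies case~\emph{(iii)} of Definition~\ref{def:CAT} directly); thus $\Omega$ is a $\CAT(k)$-comparison region for $(M,d_g)$. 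As $p$ was arbitrary, $(M,d_g)$ is locally $\CAT(k)$.

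I expect the one genuinely delicate step to be the uniform lower bound on $\SR_{g_n}(p)$, i.e.\ producing $\CAT(K_n)$-regions whose radii do not shrink to $0$: one has to keep a reference neighbourhood simultaneously inside the compact part of the incomplete manifold $(U,g_n)$, inside the conjugate-point radius $\varpi^{K_n}$, and simply connected, which is exactly what Propositions~\ref{thm:injEstCAT} and \ref{thm:convEstCAT} were built for. The $C^1$ hypothesis enters through Proposition~\ref{Prop: Localseccurvapprox} to force $\delta_n\to0$, which is what upgrades the limiting statement from $\CAT(k+\eps)$ to the sharp $\CAT(k)$; everything else — the smooth comparison theorem on a small convex ball and the limit passage via continuity of model angles — is routine.
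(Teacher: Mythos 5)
Your proposal follows essentially the same route as the paper: regularize, use Proposition~\ref{Prop: Localseccurvapprox} to get $\sec_{g_n}\le k+\delta_n$ on a fixed neighborhood, use Proposition~\ref{thm:injEstCAT} to bound $\CR_{g_n}(p)$ and $\SR_{g_n}(p)$ uniformly in $n$ (the paper does this in Remark~\ref{rk:CATunifRad}, using an arbitrary simply connected neighborhood where you use a chart ball --- same idea), then Proposition~\ref{thm:convEstCAT} to obtain a convex, uniquely geodesic ball of $n$-independent radius, and finally pass to the limit via Remark~\ref{RemarkAngleContinuity}. Two points need tightening. First, your assertion that the smooth comparison theorem directly makes the convex ball a $\CAT(K_n)$ \emph{space} hides the local-to-global step: the Cartan--Alexandrov--Toponogov theorem as invoked in Subsection~\ref{subsection: syntheticalexandrov} only gives \emph{locally} $\CAT(K_n)$, and the paper upgrades this by applying the Patchwork Globalization Theorem to the \emph{closed} ball $\overline{B_R^n(p)}$, whose completeness (together with the uniqueness and continuity of geodesics you did establish) is what that theorem requires; your open ball $B^{g_n}_r(p)$ is not complete, so either close it up or cite a global form of the comparison theorem on small convex balls explicitly. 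Second, in the limit passage you cannot simply say ``the comparison inequalities pass to the limit'': the $\CAT$-condition of Definition~\ref{def:CAT} is a disjunction of two angle inequalities, and which disjunct holds may depend on $n$; the paper first extracts a subsequence along which one fixed alternative holds for all $n_j$ and only then takes the limit. Both fixes are routine, so the argument is sound once these are inserted.
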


\begin{Remark}
    In analogy to Remark \ref{rk:localization}, one can localize Theorem \ref{thm:CATmain} to get, under the hypothesis that $(M,g)$ has distributional sectional curvature locally bounded from above, that $(M,d_g)$ is variably $\CAT$.
\end{Remark}

Denote again by $g_n$ the smooth metrics approximating $g$ constructed in Section \ref{section:Regularization}. As in the case of curvature bounded below (Theorem \ref{ThmLowerBound}), the first step towards the proof of Theorem \ref{thm:CATmain} will be to estimate sizes of comparison regions uniformly in $n \in \N$.

{\begin{Remark}\label{rk:CATunifRad}
Proposition \ref{thm:injEstCAT} allows us to find, for any $p\in M$, a positive uniform bound from below to $\SR_{g_n}(p)$ and $\CR_{g_n}(p)$. More concretely, choose any relatively compact, open and connected neighborhood $U$ of $p$. Proposition \ref{Prop: Localseccurvapprox} then ensures that $\sec_n \leq k+1$ on $U$ for $n\in \N$ big enough. By the locally uniform convergence $d^n:=d_{g_n}\to d:=d_g$, we can assume, if $k+1>0$, that $U\subseteq B_{\varpi^{k+1}}^n(p)$ for all $n \in \N$ and we know that $d^n(p,\partial U) \geq R_1$ for some $R_1>0$. Thus, in particular $\CR_{g_n}(p)\geq R_1$ for all $n\in \N$. Furthermore, as $U$ is locally Euclidean, there must exist a simply connected open neighborhood $V$ of $p$ within $U$. Again for some $R_2\in (0,R_1]$, we then have $d^n(p,\partial V) \geq R_2$ and, in particular, due to Proposition \ref{thm:injEstCAT}, 
\begin{equation} \label{eq:CATunifRad}
    \SR_{g_n}(p)\geq R_2 \quad \text{and } \quad \CR_{g_n}(p) \geq R_2 \quad \text{for all }n\in \N
\end{equation}
Observe that the estimates on $\CR_{g_n}(p)$ and $\SR_{g_n}(p)$ hold for $(M,g_n)$ as well as for $(U,g_n)$.
\end{Remark}}

\begin{Proposition}\label{prop:CATball}
    For every $p\in M$ there exists a sequence $\delta_n\searrow 0$ and $R>0$ such that that $(\overline{B_R^n(p)}, d^n)$ is a $\CAT(k+\delta_n)$ space for every $n\in \N$.
\end{Proposition}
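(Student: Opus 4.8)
The plan is to produce, for the given point $p$, a single radius $R>0$ that works for \emph{all} the smooth approximating metrics $g_n$ simultaneously. This is done by combining the approximate upper curvature bounds of Proposition \ref{Prop: Localseccurvapprox} with the uniform lower bounds on the completeness and simple-connectedness radii $\CR_{g_n}(p),\SR_{g_n}(p)$ from Remark \ref{rk:CATunifRad}, feeding both into Proposition \ref{thm:injconvCAT} to obtain a uniform-in-$n$ lower bound on the total normality radii, and then invoking the classical smooth comparison theorem for upper curvature bounds on the resulting convex balls. The main obstacle is precisely the \emph{uniformity in $n$} of this radius: without the incomplete-manifold estimates of Subsection \ref{subsec: incompleteRiemann} (Klingenberg's lemma and Propositions \ref{thm:injEstCAT}--\ref{thm:injconvCAT}) together with the non-degeneration of $\CR_{g_n}(p),\SR_{g_n}(p)$, the comparison balls could a priori shrink to nothing as $n\to\infty$.

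First I would fix a relatively compact, open and connected neighborhood $U\comp M$ of $p$ as in Remark \ref{rk:CATunifRad}, which supplies a single $R_2>0$ with $\CR_{g_n}(p),\SR_{g_n}(p)\ge R_2$ for all $n$, valid also when the $g_n$ are regarded as Riemannian metrics on $U$. Applying Proposition \ref{Prop: Localseccurvapprox} with $L=\overline U$ yields a non-increasing sequence $\delta_n\searrow 0$ with $\cR_n\le(k+\delta_n)g_n$, i.e.\ $\sec_{g_n}\le k+\delta_n$, on $\overline U$ for every $n$; for the finitely many initial indices where the approximate bound is not yet small, one enlarges $\delta_n$ — keeping the sequence non-increasing and convergent to $0$ — using smoothness of $g_n$ and compactness of $\overline U$.

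Next, since $(U,g_n)$ is a smooth connected Riemannian manifold with $\sec\le k+\delta_n$ everywhere on $U$, Proposition \ref{thm:injconvCAT} gives, using that $\delta_n\le\delta_1$ and that $K\mapsto\varpi^K$ is non-increasing, that the total normality radius of $(U,g_n)$ at $p$ satisfies
\[
\tnorm_{g_n}(p)\;\ge\;\tfrac15\bigl(\varpi^{k+\delta_n}\wedge\CR_{g_n}(p)\wedge\SR_{g_n}(p)\bigr)\;\ge\;\tfrac15\bigl(\varpi^{k+\delta_1}\wedge R_2\bigr)\;=:\;\rho_0\;>\;0
\]
uniformly in $n$. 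I would then fix any $R$ with $0<3R<\rho_0$. For each $n$, the ball $B_{2R}^n(p)$ lies inside $U$ (as $2R<R_2\le d^n(p,\partial U)$) and, by the definition of $\tnorm$ together with Proposition \ref{thm:convEstCAT}, is convex and has a unique minimizing geodesic between any two of its points, varying continuously with the endpoints; note also $2R<\tfrac12\varpi^{k+\delta_n}$.

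It remains to upgrade, for each fixed $n$, the smooth bound $\sec_{g_n}\le k+\delta_n$ on $\overline{B_R^n(p)}$ to the synthetic $\CAT(k+\delta_n)$ property. Convexity of $d^n(p,\cdot\,)$ along geodesics contained in $B_{2R}^n(p)$ — Hessian comparison \cite[Thm.\ 11.7]{LeeRiem}, exactly as in the proof of Proposition \ref{thm:convEstCAT} — forces the minimizing geodesic joining two points of $\overline{B_R^n(p)}$ to remain in $\overline{B_R^n(p)}$, so $(\overline{B_R^n(p)},d^n)$ is a compact, geodesically convex length space with unique minimizing geodesics, and all its geodesic triangles have perimeter $\le 6R<2\varpi^{k+\delta_n}$. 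Since $\sec_{g_n}\le k+\delta_n$ there, Toponogov's theorem for upper curvature bounds (cf.\ \cite[Thm.\ 1.1]{alexanderbishop2008triangle}) shows that every geodesic triangle in $\overline{B_R^n(p)}$ satisfies the $\CAT(k+\delta_n)$-comparison, which for a geodesic space is equivalent to $(\overline{B_R^n(p)},d^n)$ being a $\CAT(k+\delta_n)$ space. As $R$ and the sequence $\delta_n\searrow 0$ were chosen independently of $n$, this is exactly the claim.
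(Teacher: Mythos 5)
Your proposal follows the same overall strategy as the paper's proof: fix a relatively compact neighbourhood $U$, extract $\delta_n\searrow 0$ from Proposition \ref{Prop: Localseccurvapprox}, combine the uniform bounds on $\CR_{g_n}(p)$ and $\SR_{g_n}(p)$ from Remark \ref{rk:CATunifRad} with Proposition \ref{thm:injconvCAT} to get a uniform-in-$n$ lower bound on $\tnorm_{g_n}(p)$, and then work on a convex ball of fixed radius $R$. The one place where you genuinely diverge --- and where the argument is under-justified --- is the final step. The result you cite (\cite[Thm.\ 1.1]{alexanderbishop2008triangle}, and likewise any ``Toponogov for upper bounds'') is a \emph{local} statement: $\sec_{g_n}\le k+\delta_n$ is equivalent to every point having \emph{some} neighbourhood in which geodesic triangles satisfy the $\CAT(k+\delta_n)$ comparison. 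It does not by itself yield the comparison for all triangles in $\overline{B_R^n(p)}$, whose size is fixed independently of how small those local comparison neighbourhoods may be; a local-to-global argument is still required. The paper supplies this via the Patchwork Globalization Theorem \cite[9.30]{AKPAlexandrov}, whose hypotheses (completeness of the closed convex ball, existence and uniqueness of minimizing geodesics varying continuously with their endpoints) are exactly the properties you have already established through Propositions \ref{thm:convEstCAT} and \ref{thm:injconvCAT} and your Hessian-comparison argument. So the gap is easily closed by inserting that citation, but as written the reference you lean on does not carry the weight you place on it.
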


\begin{proof}
    Fix an open and relatively compact neighborhood $U$ of $p$. Invoking Proposition \ref{Prop: Localseccurvapprox}, we can define a sequence $\delta_n\searrow 0$ such that $\sec_n\leq k+\delta_n$ on $U$. In particular, the length spaces $(U,d^n_U)$ are locally $\CAT(k+\delta_n)$. Without loss of generality, we can moreover assume that $\delta_n\leq 1$ as well as, in the case $k+1>0$, $U\subseteq B_{\varpi^{k+1}}^n(p)$, for all $n \in \N$. Applying \eqref{eq:CATunifRad} to Proposition \ref{thm:injconvCAT}, we now get that $\tnorm_{g_n}(p)\geq \frac{1}{5}(\varpi^{k+1}\wedge R_2)=:R'$ for all $n\in \N$. Fix $R\in (0, R')$. Then
    \begin{align*}
        \overline{B_R^n(p)}= \bigcap_{r\in (R,R')} B^n_{r}(p)
    \end{align*}
    is convex and therefore inherits the length structure of $(U,g_n)$, in particular, each $\overline{B^n_r(p)}$ is locally $\CAT(k+\delta_n)$. Crucially, however, $\overline{B_R^n(p)}$ is complete. Recalling that we have already established uniqueness of minimizing geodesics and continuous variation with their endpoints, we can therefore apply the Patchwork Globalization Theorem \cite[9.30]{AKPAlexandrov} to infer that $(\overline{B_R^n(p)},d^n_U)$ is, in fact, (globally) $\CAT(k+\delta_n)$ for every $n \in \N$. We conclude by observing that the convexity of $\overline{B_R^n(p)}$ in $(M,g_n)$ ensures that $d_U^n= d^n$ on $\overline{B_R^n(p)}\times \overline{B_R^n(p)}$.
\end{proof}

\begin{proof}[Proof of Theorem \ref{thm:CATmain}]
    Let us fix $p\in M$ and take $R>0$ and a sequence $\delta_n\searrow 0$ as in Proposition \ref{prop:CATball}. Fix any admissible quadruple of points $( a,b,x,y )$ in $ B_{R/2}(p)$. Assume $a,b,x,y$ to be distinct, else the $\CAT$-comparison is trivial. Observe that for all $n\in \N$ big enough it holds that $B_{R/2}(p)\subseteq \overline{B^{n}_R(p)}$. This means that one of the following two conditions holds:
    \begin{align}
        \cang{k+\delta_n}_{d^{n}}(a;x,y)&\leq  \cang{k+\delta_n}_{d^{n}}(a;b,x) + \cang{k+\delta_n}_{d^{n}}(a;b,y),\label{eq:CATtemp1}\\
        \cang{k+\delta_n}_{d^{n}}(b;x,y)&\leq  \cang{k+\delta_n}_{d^{n}}(b;a,x) + \cang{k+\delta_n}_{d^{n}}(b;a,y).
    \end{align}
    We can then always restrict to a subsequence indexed by $\{n_j\}_{j\in \N}$ such that one of the two conditions holds for all $n_j$. Assume this condition is (\ref{eq:CATtemp1}), the other case is analogous. Now take
    \begin{align*}
        \cang{k+\delta_{n_j}}_{d^{n_j}}(a;x,y)&\leq  \cang{k+\delta_{n_j}}_{d^{n_j}}(a;b,x) + \cang{k+\delta_{n_j}}_{d^{n_j}}(a;b,y)
    \end{align*}
    to the limit $j\to \infty$. Due to the uniform convergence of the distance functions and the (joint) continuity of the model angle function (Remark \ref{RemarkAngleContinuity}) we get
    \begin{align*}
        \cang{k}_d(a;x,y)\leq  \cang{k}_d(a;b,x) + \cang{k}_d(a;b,y),
    \end{align*}
    which confirms the $\CAT(k)$-comparison.
\end{proof}
We conclude this subsection with a generalization of Theorem \ref{thm:CATmain} to locally Lipschitz metrics. The proofs will only be sketched as they are analogous to those in the previous part.
\begin{Proposition}\label{prop:CATball2}
    Let $M$ be a smooth connected manifold endowed with a $C^{0,1}_\loc$-Riemannian metric $g$. Assume that $(M,g)$ has distributional sectional curvature locally bounded from above. Then for every $p\in M$ there exists $K\in \R$ and $R>0$ such that $(\overline{B_R^n(p)}, d^n)$ is a $\CAT(K)$ space for every $n\in \N$.
\end{Proposition}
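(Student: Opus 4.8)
The plan is to run the proof of Proposition~\ref{prop:CATball} almost verbatim, the only structural change being that Proposition~\ref{Prop: Localseccurvapprox} is replaced by its Lipschitz counterpart, Proposition~\ref{Prop: Localseccurvapprox2}: instead of an approximate bound $\sec_n \le k + \delta_n$ with $\delta_n \searrow 0$, one now only obtains a \emph{single} constant $K$ with $\cR_n \le K g_n$, which is precisely why the conclusion weakens to a $\CAT(K)$-statement with no control on $K$. Concretely, I would first fix $p$, choose a relatively compact, open and connected neighborhood $U$ of $p$, and apply Proposition~\ref{Prop: Localseccurvapprox2} to obtain $K_0 \in \R$ and $n_0 \in \N$ with $\cR_n \le K_0 g_n$ on $\overline{U}$ for all $n \ge n_0$. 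Since each $g_n$ is a fixed smooth metric and $\overline{U}$ is compact, I would then enlarge $K_0$ to $K := \max\{K_0,\ \sup_{\overline{U}}\sec_{g_1},\ \dots,\ \sup_{\overline{U}}\sec_{g_{n_0-1}}\}$, so that $\sec_n \le K$ on $U$ for \emph{every} $n \in \N$. After possibly shrinking $U$ around $p$ (which leaves the bound $\sec_n \le K$ on the smaller set intact) and using the locally uniform convergence $d^n \to d$ (cf.\ \eqref{eq:distance_sandwich2}) together with the fact that only finitely many indices must be treated individually, I would also arrange $U \subseteq B^n_{\varpi^K}(p)$ for all $n$ in the case $K > 0$; this is automatic when $K \le 0$.

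Next I would reproduce the argument of Remark~\ref{rk:CATunifRad} with $K$ in place of $k+1$: fixing a simply connected open set $V$ with $p \in V$ and $\overline{V} \Subset U$, the locally uniform convergence of the $d^n$ on $\overline{U}$ --- supplemented by the finitely many small indices, for which each $d^n$ is simply a fixed metric --- yields a \emph{single} $R_2 > 0$ with $d^n(p, \partial V) \ge R_2$ and $d^n(p, \partial U) \ge R_2$ for all $n \in \N$. Applying Proposition~\ref{thm:injEstCAT} to $(U, g_n)$ (on which $\sec_{g_n} \le K$ holds throughout) then gives $\SR_{g_n}(p) \ge R_2$ and $\CR_{g_n}(p) \ge R_2$ uniformly in $n$, as in \eqref{eq:CATunifRad}, and Proposition~\ref{thm:injconvCAT} upgrades this to $\tnorm_{g_n}(p) \ge \frac{1}{5}(\varpi^K \wedge R_2) =: R'$ for all $n$. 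In particular, for each $r \in (0, R')$ and each $n$, any two points of $B^n_r(p)$ are joined by a unique minimizing geodesic that varies continuously with its endpoints, and $B^n_r(p)$ is convex (Proposition~\ref{thm:convEstCAT}).

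Finally, I would fix $R \in (0, R')$ and finish exactly as in Proposition~\ref{prop:CATball}: the set $\overline{B^n_R(p)} = \bigcap_{r \in (R, R')} B^n_r(p)$ is convex --- hence inherits the length structure of $(U, g_n)$ --- and is compact, being closed in $\overline{U}$, hence complete; since each $\overline{B^n_r(p)} \subseteq U$ is locally $\CAT(K)$ with uniquely and continuously varying minimizing geodesics, the Patchwork Globalization Theorem \cite[9.30]{AKPAlexandrov} promotes $(\overline{B^n_R(p)}, d^n_U)$ to a globally $\CAT(K)$ space; and convexity of $\overline{B^n_R(p)}$ in $(M, g_n)$ yields $d^n_U = d^n$ there, so $(\overline{B^n_R(p)}, d^n)$ is $\CAT(K)$ for every $n \in \N$.

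The main obstacle here is bookkeeping rather than geometry: all the uniform-in-$n$ radius estimates --- for $\CR_{g_n}(p)$, $\SR_{g_n}(p)$ and hence for $\tnorm_{g_n}(p)$ --- must be made to hold for \emph{all} $n \in \N$, including the finitely many small indices at which $g_n$ need not be a mollification of $g$. This is dealt with by absorbing those indices into the constant $K$ and into the choice of $U$ and $V$, using only that each $g_n$ is a fixed smooth metric on the compact set $\overline{U}$. The genuinely analytic content --- passing from the distributional curvature bound to approximate smooth bounds --- is entirely packaged in Proposition~\ref{Prop: Localseccurvapprox2}, so no new analysis is needed.
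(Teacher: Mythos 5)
Your proposal is correct and follows essentially the same route as the paper's proof: replace Proposition~\ref{Prop: Localseccurvapprox} by Proposition~\ref{Prop: Localseccurvapprox2}, rerun Remark~\ref{rk:CATunifRad} with the constant $K$, apply Proposition~\ref{thm:injconvCAT} to get a uniform lower bound on $\tnorm_{g_n}(p)$, and conclude via the convexity and Patchwork Globalization argument of Proposition~\ref{prop:CATball}. Your explicit enlargement of $K$ to absorb the finitely many indices $n<n_0$ is a small point of extra care that the paper's sketch glosses over, but it does not change the argument.
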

\begin{proof}
    Fix a relatively compact and open neighborhood $U$ of $p$. By assumption, the sectional curvature is bounded from above on $(U,g)$ in the distributional sense by some constant. Hence, by Proposition \ref{Prop: Localseccurvapprox2} there exists $K\in \R$ such that $\sec_n\leq K$ on $U$ for all $n\in \N$. In particular $(U,d^n_U)$ are locally $\CAT(K)$ spaces. Up to restricting $U$ we can assume, if $K>0$, that $U\subseteq B_{\varpi^{K}}^n(p)$ for all $n$. By applying the procedure from Remark \ref{rk:CATunifRad} we obtain uniform estimates to $\SR(p)$ and $\CR(p)$. By Theorem \ref{thm:injconvCAT} we get that $\tnorm_{g_n}(p)\geq \frac{1}{5}(\varpi^{K} \wedge R_2)=R'$ for all $n\in \N$. Fix $R\in (0, R')$. The rest of the proof is analogous to Proposition \ref{prop:CATball}.
\end{proof}
\begin{Theorem}[Distributional upper bounds imply variable $\CAT$ in $C^{0,1}_{\mathrm{loc}}$]
    Let $(M,g)$ be a smooth connected manifold endowed with a $C^{0,1}_{\loc}$-Riemannian metric and denote by $d_g$ the Riemannian distance induced by $g$. Let $k\in\R$ and suppose $(M,g)$ has distributional sectional curvature bounded from above by $k$. Then $(M,d_g)$ is variably $\CAT$.
\end{Theorem}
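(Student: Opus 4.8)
The plan is to follow the proof of Theorem~\ref{thm:CATmain} almost verbatim, replacing Proposition~\ref{prop:CATball} by its Lipschitz counterpart Proposition~\ref{prop:CATball2}. The only structural difference is that the latter produces a comparison parameter $K = K(p) \in \R$ over which we have no control (in particular it need not be close to $k$), so the conclusion degrades from \emph{locally} $\CAT(k)$ to \emph{variably} $\CAT$. Concretely, fix $p \in M$; since a global bound by $k$ is in particular a local bound in the sense of Definition~\ref{def:distrSec}, Proposition~\ref{prop:CATball2} applies and yields a radius $R > 0$ and a constant $K \in \R$ such that $(\overline{B^n_R(p)}, d^n)$ is a (globally) $\CAT(K)$ space for every $n \in \N$, where $d^n = d_{g_n}$ for the approximating metrics $g_n$ of Section~\ref{section:Regularization}; recall that by convexity of $\overline{B^n_R(p)}$ the restricted Riemannian distance $d^n$ agrees there with the induced intrinsic metric, so this is a genuine $\CAT(K)$ statement.

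Next I would fix an admissible quadruple $(a,b,x,y)$ in $B_{R/2}(p)$ — assuming the four points distinct, else the comparison is trivial — and exploit the locally uniform convergence $d^n \to d := d_g$ from \eqref{eq:distance_sandwich2}. For $n$ large we have $B_{R/2}(p) \subseteq \overline{B^n_R(p)}$, and since $(a,b,x,y)$ is admissible for $d$ (so its pairwise perimeters are $< 2\varpi^K$ with strict inequality) it is, by continuity of the pairwise distances, still admissible for $d^n$ once $n$ is large; hence for all such $n$ one of the two angle inequalities of Definition~\ref{def:CAT} holds. Passing to a subsequence $\{n_j\}$ along which the \emph{same} inequality holds for every index, say
\[
\cang{K}_{d^{n_j}}(a;x,y) \le \cang{K}_{d^{n_j}}(a;b,x) + \cang{K}_{d^{n_j}}(a;b,y),
\]
and letting $j \to \infty$, the joint continuity of the model angle function in the pair $(k,d)$ from Remark~\ref{RemarkAngleContinuity} gives $\cang{K}_d(a;x,y) \le \cang{K}_d(a;b,x) + \cang{K}_d(a;b,y)$, i.e.\ $\CAT(K)$-comparison for $(a,b,x,y)$ with respect to $d$. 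As the quadruple was arbitrary, $B_{R/2}(p)$ is a $\CAT(K)$-comparison region of $(M,d_g)$, and since $p$ was arbitrary this proves that $(M,d_g)$ is variably $\CAT$.

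There is essentially no hard step here beyond what was already carried out for Theorem~\ref{thm:CATmain}: the persistence of admissibility along the tail of the sequence is a routine perimeter estimate built from the strict inequalities $\mathrm{Per} < 2\varpi^K$ and the uniform convergence of the distances, and the cases $K \le 0$ cause no trouble since then $\varpi^K = \infty$ and every model angle is automatically well-defined. The genuinely new feature — and the reason the statement reads \emph{variably} $\CAT$ rather than locally $\CAT(k)$ — is simply that Proposition~\ref{Prop: Localseccurvapprox2}, unlike Proposition~\ref{Prop: Localseccurvapprox}, gives no quantitative control of the constant in terms of $k$, so one cannot take a limit in the curvature parameter.
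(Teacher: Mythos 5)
Your proposal is correct and follows essentially the same route as the paper: fix $p$, invoke Proposition~\ref{prop:CATball2} to obtain $R>0$ and $K\in\R$ with $(\overline{B^n_R(p)},d^n)$ globally $\CAT(K)$, pass to a subsequence along which the same angle inequality of Definition~\ref{def:CAT} holds, and take the limit using uniform convergence of $d^n\to d_g$ together with the continuity of model angles from Remark~\ref{RemarkAngleContinuity}. The extra remarks you add (persistence of admissibility for large $n$, triviality of the case $K\le 0$) are details the paper leaves implicit but do not change the argument.
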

\begin{proof}
    Let us fix $p\in M$ and take $R>0, K\in \R$ as in Proposition \ref{prop:CATball2}. Fix a quadruple of pairwise distinct points $a,b,x,y$ in $B_{R/2}(p)$. Analogously to Theorem \ref{thm:CATmain}, it is not restrictive to assume that the following holds for a sequence $\{n_j\}_{j\in \N}$
    \begin{align*}
        \cang{K}_{d^{n_j}}(a;x,y)\leq\cang{K}_{d^{n_j}}(a;b,x) + \cang{K}_{d^{n_j}}(a;b,y).
    \end{align*}
    Taking $j\to \infty$ yields
    \begin{align*}
        \cang{K}_{d}(a;x,y)\leq\cang{K}_{d}(a;b,x) + \cang{K}_{d}(a;b,y).
    \end{align*}
    This confirms the $\CAT(K)$-comparison. Remember that here $K$ depends on $p\in M$. Thus, we only conclude that $(M,d)$ is a variably $\CAT$ space.
\end{proof}

\subsection{Examples}\label{Subsec: examples}

We conclude this section by analyzing two classical examples due to P.\ Hartman and A.\ Wintner \cite{HaWi1951geod} which show that, in the absence of distributional curvature bounds, CAT or CBB-properties may fail to hold.

In analogy to the smooth case, one may wonder whether 
the rigid properties of Alexandrov spaces (such as geodesic non-branching for $\CBB$- or geodesic uniqueness for $\CAT$-spaces) continue to hold for Riemannian metrics of lower regularity. However, it turns out that, in general, this is no longer the case for regularities weaker than $C^{1,1}_\loc$. An example for this in $C^{1,\alpha}_\loc$, with arbitrary $\alpha<1$, has been given in \cite{HaWi1951geod}. Consider, for $\lambda\in (1,2)$,
\begin{gather*}
    M=\R_x\times \R_y,\qquad
    g:=\begin{pmatrix}
            1+\vert x \vert^\lambda & 0\\
            0 & 1+\vert x \vert^\lambda
        \end{pmatrix}.
\end{gather*}

Here, one can compute the (distributional) Riemannian tensor, obtaining
\begin{align*}
     \dif x ( \Riem (\partial_x, \partial_y) \partial_y)= -\frac{\lambda(\lambda -1)\vert x \vert^{\lambda - 2}}{2(1+ \vert x\vert^\lambda)^2} 
     + \frac{\lambda \vert x \vert^{2\lambda - 2}}{2(1+ \vert x\vert^\lambda)^2}.
\end{align*}
Thus, the sectional, i.e., in this case, the Gauss curvature
\[
\sec(\partial_x,\partial_y) = \frac{\lambda |x|^{\lambda-2}(1+|x|^\lambda -\lambda)}{2(1+|x|^\lambda)^3}
\]

blows up to $-\infty$ as $x\to 0$. Consequently, $g$ does not possess a lower sectional curvature bound in the distributional sense. However, it satisfies a bound from above. Therefore, we know that $(M,d_g)$ is locally $\CAT$ by Theorem \ref{thm:CATmain}. This is in line with \cite[Sec.\ 7]{HaWi1951geod}, in which Hartman and Wintner prove that all points $(x,y)\in M$ are joined to $(0,0)$ by exactly one geodesic. Since distance realizers for $C^1$-Riemannian metrics necessarily satisfy the geodesic equation, any such geodesic must then be the unique distance minimizer between $(x,y)$ and $(0,0)$, which is precisely the behavior expected from a $\CAT$ space. Moreover, the authors prove the existence of multiple geodesics branching along the line $\{x=0\}$, and again by uniqueness of connecting geodesics all of these curves are minimizing. 
This implies that $(M,d_g)$ is not a metric space with sectional curvature bounded from below in the synthetic sense, as $\CBB$ implies non-branching of minimizing geodesics.

An example in $C^{1,\alpha}$ of the opposite phenomenon again comes from \cite[Sec.\ 5]{HaWi1951geod}. For this, fix $\lambda\in (1,2)$ and let
\begin{gather*}
    M=(-1,1)_x\times \R_y,\qquad
    g:=\begin{pmatrix}
            1 & 0\\
            0 & 1-\vert x \vert^\lambda
        \end{pmatrix}.
\end{gather*}
Then
\[
\dif x ( \Riem (\partial_x, \partial_y) \partial_y)= \frac{\lambda}{2}(\lambda-1)|x|^{\lambda-2} + \frac{\lambda^2 \vert x \vert^{2\lambda - 2}}{4- 4\vert x\vert^\lambda},
\]
\[
\sec(\partial_x,\partial_y) = \frac{\lambda |x|^{-2+\lambda}(2\lambda +|x|^\lambda(2-\lambda)-2)}{4(|x|^\lambda-1)^2}.
\]

Here, the sectional curvature blows up to $+\infty$ as $x\to 0$, while being bounded from below. Hence, Theorem \ref{ThmLowerBound} ensures that $(M,d_g)$ is locally $\CBB$ and so we know that minimizing geodesics do not branch. Indeed, this is also a direct consequence of the fact that, in this example, the initial value problem for geodesics admits a unique solution. On the other hand, all sufficiently close points on the line $\{x=0\}$ admit two minimizing geodesics (and infinitely many non-minimizing curves satisfying the geodesic ODE). This implies that $(M,d_g)$ is not a $\CAT$-space.

\section{Conclusion \& outlook}\label{section: Conclusion and outlook}

In this work, we have introduced sectional curvature bounds based on distributional inequalities for the curvature tensor of metrics in the Geroch--Traschen class. We have shown that if the metric is $C^1$, then these distributional curvature bounds imply the corresponding ones from the theory of Alexandrov spaces.

Naturally, the next question to investigate would be a converse of our results for $g \in C^1$:

\begin{Conjecture} \label{conjecture: C1converse}
    Let $(M,g)$ be a Riemannian manifold with $g \in C^1$ such that $(M,d_g)$ is an Alexandrov space with curvature bounded from below (resp.\ above) by $k \in \R$. Then the distributional sectional curvature of $(M,g)$ is bounded from below (resp.\ above) by $k$.
\end{Conjecture}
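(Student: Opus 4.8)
The plan is to establish the distributional inequality by testing $\cR - kg$ against an arbitrary pair $X, Y \in C^\infty(TM)$ and a non-negative $\phi \in \D(M)$, and to extract that inequality directly from the synthetic comparison geometry of $(M, d_g)$. Since $g \in C^1$ the Levi--Civita connection is continuous and $\Riem$ is a tensor distribution of order one, so — pairing with $X, Y$ and $\phi$ and integrating by parts once in a coordinate chart — the desired estimate becomes an integral inequality involving only $g$, its continuous first derivatives, and $\phi$. In particular the statement is local, and after rescaling we may assume $g$ is $C^1$-close to the Euclidean metric on a ball $B$. The first step is to set up a \emph{smeared index form}: foliate $B$ by a smooth family of minimizing geodesics $\{\gamma_a\}_{a \in A}$ of $g$ — these exist and vary continuously with $a$ because, under the hypothesis, $(M, d_g)$ is $\CBB(k)$ (resp.\ $\CAT(k)$), so distance realizers solve the $C^1$ geodesic ODE and, between nearby points, are unique and non-branching — and note that the curvature term $\int_A \int \langle \cR(V_a, \gamma_a')\gamma_a', V_a\rangle\, dt\, da$ of the second variation of arc length, integrated over the leaf space, is precisely the pairing of $\cR$ with a smooth object built from $\phi$, $X$, $Y$ and the Jacobian of the foliation. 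A localization argument (shrinking the foliation and the transversal fields $V_a$ toward a point and a direction) should then show that the distributional inequality $\cR \geq kg$ (resp.\ $\leq$) on $B$ is \emph{equivalent} to the family of smeared index-form inequalities $\int_A I_{\gamma_a}(V_a, V_a)\, da \leq \int_A I^k_{\gamma_a}(V_a, V_a)\, da$ (resp.\ $\geq$), taken over all such foliations and transversal fields, where $I^k$ denotes the model index form of constant curvature $k$.

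The crucial step is to derive these smeared index-form inequalities from the $\CBB(k)$ (resp.\ $\CAT(k)$) hypothesis. In a $\CBB(k)$ space, geodesics issuing from a point spread apart at least as fast as in $\mathbb{M}^2(k)$; equivalently, for every point $p$ and geodesic $\sigma$ the modified distance $\md_k \circ d_g(p, \sigma(\cdot))$ is $(1 - k\,\md_k\circ d_g(p,\sigma(\cdot)))$-concave in the barrier sense, and comparison angles are monotone — while in a $\CAT(k)$ space the dual statements (convexity, the reverse monotonicity) hold. Using the first variation formula, which remains valid for $C^1$ metrics, I would translate these second-order comparison statements into index-form bounds $I_\gamma(J, J) \leq I^k_\gamma(J, J)$ (resp.\ $\geq$) for Jacobi-type fields $J$ along short geodesics $\gamma$, and then integrate over the leaves of the foliation. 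The technical backbone here is the theory of second variation and Jacobi fields in Alexandrov spaces; what one needs is that, on a $C^1$-manifold, the Alexandrov-geometric second variation of a geodesic coincides with the Riemannian index form of $g$, which should follow from non-branching together with the $C^1$ geodesic equation but requires a careful identification. (An alternative route would reduce to dimension two by restricting $g$ to smooth surfaces adapted to a given $2$-plane $\Pi$ and invoking the Gauss equation, whose second-fundamental-form corrections are controlled by $C^1$ data alone; but induced-length metrics on submanifolds do not inherit the $\CBB(k)$ property, so one would still have to compare $\Pi$-adapted geodesic triangles of $g$ with the genuine comparison triangles of $(M, d_g)$, which does not appear to be any easier.)

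The main obstacle — and the reason this remains a conjecture — is that one cannot shortcut through the smooth approximations $g_n$. Although $g_n \to g$ in $C^1_\loc$ and hence $d_{g_n} \to d_g$ uniformly on compacta, the curvatures $\cR_n$ may oscillate without bound while converging to $\cR$ only distributionally, so $(M, g_n)$ need not be even approximately $\CBB(k)$, and the stability of Alexandrov bounds under Gromov--Hausdorff limits runs in the opposite direction and is of no help. (The Hartman--Wintner example $g = \diag(1, 1 - |x|^\lambda)$ sharpens the point: it is genuinely only $C^{1, \lambda - 1}$ yet locally $\CBB$, so the conjecture cannot be obtained from any a priori regularity gain.) The weak inequality must therefore be squeezed directly out of the comparison geometry of the $C^1$ metric, with no regularity to spare: one has to show that the ``smeared'' negative (resp.\ positive) part of $\cR - kg$ along generic geodesic foliations is forced to vanish by triangle and angle comparison. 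Beyond the identification of the Alexandrov and Riemannian second variations mentioned above, the hard analytic point is the Fubini-type passage from ``along almost every geodesic of the foliation'' to ``as a distribution on $B$'', carried out while controlling index forms only up to error terms that must themselves be shown to be integrable in $a$ and to vanish in the localization limit. I expect that making this rigorous will require a genuinely new estimate rather than an assembly of the standard comparison tools.
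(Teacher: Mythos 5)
The statement you are proving is stated in the paper as a \emph{Conjecture}: the authors offer no proof, explicitly leave it open, and only point to related work (Lebedeva--Petrunin on curvature tensors of smoothable Alexandrov spaces, and Mondino--Ryborz for the Ricci analogue) while predicting that a proof ``will rely heavily on results from the analysis of Alexandrov spaces.'' So there is nothing in the paper to compare your argument against, and your submission is, by your own account, not a proof either: it is a strategy outline whose two load-bearing steps are left unestablished. Concretely, (a) the identification of the Alexandrov-geometric second variation along a $d_g$-minimizer with the Riemannian index form of a $C^1$ metric is precisely where the regularity is too low for the classical computation (the Jacobi equation involves $\cR$ itself, which is only a distribution of order one, so ``Jacobi-type fields'' and $I_\gamma(J,J)$ are not classically defined along a single geodesic), and (b) the Fubini/localization passage from smeared index-form inequalities over a geodesic foliation to the distributional inequality $\cR \geq kg$ tested against arbitrary $X,Y,\phi$ is asserted to be an equivalence but never derived; one would at least need to show that the wedge $X\wedge Y$ at a point can be realized, up to controllable error, by $\gamma_a'\wedge V_a$ for an admissible foliation, and that the error terms in the $C^1$ second-variation formula (which involve first derivatives of $g$ squared, not just $\cR$) are handled. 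You correctly identify both obstacles yourself, which is to your credit, but it means the proposal establishes nothing beyond a plausible programme.

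Two smaller points. First, your gloss of $\CBB(k)$ is backwards: in a space with curvature bounded \emph{below} by $k$, geodesics issuing from a point spread apart \emph{no faster} than in $\mathbb{M}^2(k)$ (hinge distances are bounded above by the model, comparison angles bound actual angles from below); the barrier-concavity statement $(\md_k\circ d_p)'' \le 1 - k\,(\md_k\circ d_p)$ that you write immediately afterwards is the correct one, so this is a slip in the informal description rather than in the substance, but it should be fixed. Second, your observation that one cannot route the converse through the smooth approximations $g_n$ is sound and matches the paper's framing: the regularization machinery of Section 2.3 transfers distributional bounds \emph{to} the $g_n$, not synthetic bounds \emph{from} $d_g$ back to $\cR$, which is exactly why the forward direction is a theorem and this direction remains open.
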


A result of similar nature, albeit with slightly different hypotheses, can be found in \cite{lebedevapetrunin2024curvature}.

Similar compatibility questions in the case of Ricci curvature bounds (below) have recently been answered (up to technical details) to the positive for continuous Geroch--Traschen metrics by Mondino--Ryborz \cite{mondino2024equivalence} (see also \cite{KOV24}). There, the converse direction (i.e., synthetic $\Rightarrow$ distributional) involved the use of numerous tools from the synthetic theory of $\mathrm{RCD}$-spaces. Essentially, it used that synthetic Ricci curvature bounds below can be expressed in terms of a bound on a measure-valued Ricci tensor introduced by Gigli (see \cite{GigliPasq}). We expect that, similarly, the proof of Conjecture \ref{conjecture: C1converse} will rely heavily on results from the analysis of Alexandrov spaces.

In this context, at least in the case of curvature bounded below, the problem may be related to the detailed study of the Riemann curvature tensor introduced by Gigli in general $\mathrm{RCD}$-spaces (see \cite{GigliRiemann}). To the authors' knowledge, no link between bounds of the Riemann tensor and the theory of Alexandrov spaces has yet been established. One should further keep in mind that sectional curvature bounds on smooth Riemannian manifolds have been characterized using methods from optimal transport (see the work of Ketterer--Mondino \cite{KettererMondinoSectional}), which are much more commonplace in the study of Ricci curvature bounds. The link between these methods and distributional bounds for low regularity metrics is certainly worth investigating. Ultimately, the goal would be to show the equivalence between distributional sectional curvature bounds and Alexandrov bounds for continuous Geroch--Traschen metrics, in analogy with Mondino-Ryborz \cite{mondino2024equivalence}. 

Moreover, analogous questions concerning the compatibility of distributional and synthetic notions of sectional curvature bounds arise naturally for metrics of indefinite signature \cite{alexanderbishop2008triangle}. In particular, timelike sectional curvature bounds in Lorentzian signature \cite{BKOR} are of fundamental importance in General Relativity and the geometry of spacetimes.

\section*{Acknowledgments}
This research was funded in part by the Austrian Science Fund (FWF) [Grants DOI \linebreak \href{https://doi.org/10.55776/PAT1996423}{10.55776/PAT1996423},   \href{https://doi.org/10.55776/EFP6}{10.55776/EFP6} and \href{https://doi.org/10.55776/J4913}{10.55776/J4913}]. For open access purposes, the authors have applied a CC BY public copyright license to any author-accepted manuscript version arising from this submission. Argam Ohanyan was partly supported by the Canada Research Chairs program CRC-2020-00289 and Natural Sciences and Engineering Research Council of Canada Grant RGPIN-2020-04162. Alessio Vardabasso acknowledges the support of the Vienna School of Mathematics.

\addcontentsline{toc}{section}{References}

\end{document}